\documentclass[12pt]{amsart}

\usepackage{amssymb,amsmath,latexsym,comment,color,cite}

\usepackage[left=3cm, right=3cm, top=3cm, bottom=3cm]{geometry}
\newtheorem{theorem}{Theorem}[section]

\newtheorem{corollary}[theorem]{Corollary}

\newtheorem{lemma}[theorem]{Lemma}

\newtheorem{problem}[theorem]{Open problem}
\newtheorem{proposition}[theorem]{Proposition}

\theoremstyle{definition}
\newtheorem{remark}[theorem]{Remark}
\newtheorem{definition}[theorem]{Definition}
\newtheorem{example}[theorem]{Example}

\begin{document}

\title[Arithmetic-geometric mean, additive, and multiplicative contractions]{Arithmetic-geometric mean, additive, and multiplicative contractions: new generalizations of the Banach Contraction Principle}

\author[I. S. Singh]{Irom Shashikanta Singh}

\address{Department of Mathematics, Manipur University,Imphal West, 795003,
Manipur, India}

\email{shashikanta.phd.math@manipuruniv.ac.in}

\author[Y. M. Singh]{Yumnam Mahendra Singh}

\address{Department of Basic Sciences and Humanities,
Manipur Institute of Technology,
Canchipur, Imphal West, 795003,
Manipur, India}

\email{ymahenmit@rediffmail.com}

\author[E. Petrov]{Evgeniy Petrov}

\address{Institute of Applied Mathematics and Mechanics of the NAS of Ukraine,
Batiuka str. 19, 84116 Slovyansk, Ukraine}

\email{eugeniy.petrov@gmail.com}

\author[R. Salimov]{Ruslan Salimov}

\address{Institute of Mathematics of the NAS of Ukraine,
Tereschenkivska str. 3,  01024 Kiev, Ukraine}

\email{ruslan.salimov1@gmail.com}

\subjclass{Primary 47H10; Secondary 54E50,26E60,54E40.}

\keywords{fixed-point, A.M.-G.M. contraction, Picard operator, arithmetic mean, geometric mean, continuity}

\date{}

\dedicatory{}

\commby{}


\begin{abstract}
We introduce new contraction conditions based on classical inequality between arithmetic and geometric means. By incorporating an auxiliary semimetric $\delta$, we define arithmetic-geometric mean, multiplicative-type, and additive-type contractions. Connections between these types of contractions are found. Fixed point theorems are proved in the case of continuity of the above mentioned contractions. Under suitable regularity conditions on $\delta$ (such as being d-regular, strongly d-regular, or d-lower bounded) we obtain constructive corollaries. Various examples demonstrating our results are constructed. It is shown that with certain caveats fixed point theorem for additive-type mappings is equivalent to the fixed point theorem for perturbed metric spaces, which were recently introduced by M. Jleli and B. Samet.
\end{abstract}

\maketitle

\section{Introduction and preliminaries}
The Banach fixed-point theorem represents a pivotal contribution to mathematics, particularly within functional analysis. Originating from the work of Stefan Banach, a prominent Polish mathematician in the early 20th century, this theorem remains a foundational tool in mathematical discourse. The Banach fixed-point theorem~\cite{banach} states the following. 
\begin{theorem}\label{ThBanach}	
    Let $( X ,d)$ be a complete metric space and $T  \colon   X  \rightarrow  X  $ be a self-mapping satisfying
    \begin{equation*}
        d(T  x,T  y)\le \alpha  d(x,y),
    \end{equation*}
    for all $x,y \in  X  $, where $\alpha \in [0,1)$ is a constant. Then, $T $ is a Picard operator.
\end{theorem}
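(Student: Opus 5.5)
The plan is the classical Picard iteration argument. Recall that $T$ being a Picard operator means that $T$ has a unique fixed point $x^*$ and that for every $x_0\in X$ the sequence of iterates $x_n=T^n x_0$ converges to $x^*$. Fix an arbitrary $x_0\in X$ and put $x_{n+1}=Tx_n$. Applying the contraction inequality repeatedly, I will show by induction that
\begin{equation*}
d(x_{n+1},x_n)\le \alpha^n d(x_1,x_0)
\end{equation*}
for all $n\ge 0$.

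Next I will show that $(x_n)$ is a Cauchy sequence, which is the main step. For $m>n$, the triangle inequality and the geometric series give
\begin{equation*}
d(x_m,x_n)\le \sum_{k=n}^{m-1}\alpha^k d(x_1,x_0)\le \frac{\alpha^n}{1-\alpha}\,d(x_1,x_0).
\end{equation*}
Since $\alpha\in[0,1)$, the right-hand side tends to $0$ as $n\to\infty$. By completeness, $x_n\to x^*$ for some $x^*\in X$. This estimate is the only point where $\alpha<1$ is used essentially, and I expect it to be the main (though routine) obstacle.

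To see that $x^*$ is a fixed point, note first that the contraction condition makes $T$ Lipschitz, hence continuous. Then $x^*=\lim x_{n+1}=\lim Tx_n=Tx^*$. Alternatively, one can avoid continuity and use
\begin{equation*}
d(Tx^*,x^*)\le d(Tx^*,Tx_n)+d(x_{n+1},x^*)\le \alpha d(x^*,x_n)+d(x_{n+1},x^*)\to 0.
\end{equation*}

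For uniqueness, suppose $Tx^*=x^*$ and $Ty^*=y^*$. Then $d(x^*,y^*)\le\alpha d(x^*,y^*)$, so $(1-\alpha)d(x^*,y^*)\le 0$ and therefore $x^*=y^*$. Because the starting point $x_0$ was arbitrary and the limit is always this unique fixed point, every Picard sequence converges to $x^*$. Hence $T$ is a Picard operator.
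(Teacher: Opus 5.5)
Your proof is correct and is essentially the paper's argument. The paper obtains Theorem~\ref{ThBanach} by setting $\delta=d$ in Theorem~\ref{am-gm-contr th}; a Banach contraction is Lipschitz, hence continuous, and that theorem's proof is your Picard-iteration and geometric-series estimate with $d+\delta$ in place of $d$, followed by the same continuity and uniqueness steps.
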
  
Let $( X ,d)$ be a metric space and $T \colon  X  \rightarrow  X  $ be a self-mapping. Recall that $T $ is called \emph{Picard operator}~\cite{picard operator} if $T $ has a unique fixed point $u \in  X  $ and for any $x \in  X  $, the sequence of iterates $(T ^nx)$ converges to $u$.

The Banach contraction principle has undergone extensive generalizations across various mathematical dimensions over the past decades. It is possible to distinguish at least three fundamental directions in the development of this theory. In the first case, the contractive nature of the mapping is weakened by relaxing the functional requirements, as demonstrated in the works of, e.g.,~\cite{BW69,Ki03,Mk69,Pr20,Ra62,Su06,Wa12,P23}. Another significant approach involves weakening the underlying topology of the space, for instance, by considering semimetric, $b$-metric, or quasi-metric structures~\cite{Ba00, JS15,Fr00,HZ07,KKR90,Ta74,SIIR20}. Finally, a third category comprises multi-valued generalizations that extend the principle to set-valued contractions, see, e.g.,~\cite{Na69,Ma68,AK72}. Such extensions are remarkably numerous and, as a rule, are aimed at establishing the criteria for the existence and uniqueness of a fixed point.

Let $X$ be a nonempty set. Recall that a mapping  $d\colon X\times X\to \mathbb{R}^+$, $\mathbb{R}^+=[0,\infty)$ is a \emph{metric} if for all $x,y,z \in X$ the following axioms hold:
\begin{itemize}
  \item [(i)] $(d(x,y)=0)\Leftrightarrow (x=y)$,
  \item [(ii)] $d(x,y)=d(y,x)$,
  \item [(iii)] $d(x,y)\leqslant d(x,z)+d(z,y)$.
\end{itemize}
The pair $(X,d)$ is called a \emph{metric space}. If only axioms (i) and (ii) hold then $d$ is called a \emph{semimetric}. A pair $(X,d)$, where  $d$  is a semimetric on $X$, is called a \emph{semimetric space}.
Such spaces were first examined by Fr\'{e}chet in~\cite{Fr06}, where he called them ``classes (E)''. Later these spaces attracted the attention of many mathematicians ~\cite{Ch17,Ni27,Wi31,Fr37,PS22}.

For non-negative real numbers $a$ and $b$, the arithmetic mean (A.M.) and geometric mean (G.M.) are defined as $\frac{a+b}{2}$ and $(ab)^{1/2}$, respectively. These means satisfy the classical inequality
\begin{equation}\label{am-gm}
    (ab)^{1/2} \le \frac{a+b}{2}.
\end{equation}
Motivated by the relationship between A.M. and G.M. given by (\ref{am-gm}), we introduce the concept of arithmetic-geometric mean contraction (A.M.-G.M. contraction), which generalizes the classical Banach contraction principle.

\begin{definition}
      Let \( ( X ,d) \) be a metric space. A self-mapping  \( T   \colon   X   \to  X   \) is said to be \emph{A.M.-G.M. contraction} if there exists $\alpha  \in [0,1)$ and a semimetric $\delta\colon  X\times X \to \mathbb{R}^+$ such that
\begin{equation}\label{am-gm-contr}
\frac{d(T  x, T  y)+\delta(T  x,T  y)}{2} \leq \alpha  \big(d(x, y)\delta(x,y)\big)^\frac{1}{2},
\end{equation}
 for all   $x, y \in  X  $.
\end{definition}

\begin{remark}
    Note that if we set $\delta=d$, then A.M.-G.M. contraction reduces to Banach contraction.
\end{remark}

Applying inequality~(\ref{am-gm}) to the left  hand side of~(\ref{am-gm-contr})  we obtain the following
$$
\big(d(Tx, Ty)\delta(Tx,Ty)\big)^\frac{1}{2} \leq \alpha  \big(d(x, y)\delta(x,y)\big)^\frac{1}{2}.
$$
Taking the square of both sides gives
$$
d(Tx, Ty)\delta(Tx,Ty) \leq \alpha^2  d(x, y)\delta(x,y).
$$
This leads to the following definition.

\begin{definition}
      Let \( ( X ,d) \) be a metric space. A self-mapping  \( T   \colon   X   \to  X   \) is said to be a \emph{multiplicative-type contraction} if there exists $\alpha  \in [0,1)$ and a semimetric $\delta\colon  X\times X \to \mathbb{R}^+$ such that
\begin{equation}\label{multi-contr}
d(Tx, Ty)\delta(Tx,Ty) \leq \alpha  d(x, y)\delta(x,y),
\end{equation}
 for all   $x, y \in  X  $.
\end{definition}

\begin{remark}
It is clear that every A.M.-G.M. contraction with the coefficient $\alpha$ and semimetric $\delta$ is a multiplicative-type contraction with the coefficient $\alpha^2$ and the same semimetric $\delta$. Hence, the class of A.M.-G.M. contractions is a subclass of  multiplicative-type contractions, i.e.,
  $$
  \{\text{A.M.-G.M. contractions}\} \subseteq  \{\text{multiplicative-type contractions}\}.
  $$
\end{remark}

Applying inequality~(\ref{am-gm}) to the right hand side of~(\ref{am-gm-contr}) and multiplying by 2 leads to the following.

\begin{definition}\label{d16}
      Let \( ( X ,d) \) be a metric space. A self-mapping  \( T   \colon   X   \to  X   \) is said to be \emph{additive-type contraction} if there exists $\alpha  \in [0,1)$ and a semimetric $\delta\colon  X\times X \to \mathbb{R}^+$ such that
\begin{equation}\label{additive-contr}
{d(T  x, T  y)+\delta(T  x,T  y)}\leq \alpha  ({d(x, y)+\delta(x,y)}),
\end{equation}
 for all   $x, y \in  X  $.
\end{definition}

\begin{remark}
  It is clear that the class of A.M.-G.M. contractions is a subclass of additive-type contractions, i.e.,
  $$
  \{\text{A.M.-G.M. contractions}\} \subseteq  \{\text{additive-type contractions}\}.
  $$
\end{remark}

\section{Arithmetic-geometric mean contractions}\label{sec2}

In this section, we study the properties of A.M.–G.M. contractions and establish a fixed-point theorem for these mappings.

\begin{proposition}\label{p23}
    Let \( ( X ,d) \) be a metric space. Then, every A.M.-G.M. contraction with the coefficient $\alpha>0$ is Banach contraction if for all $x, y \in  X $ the inequality $\delta(x, y)\le \beta d(x, y)$ holds with  $0 \le \beta < \frac{1}{4\alpha^2}$.
\end{proposition}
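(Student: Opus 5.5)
We will show directly that $d(Tx,Ty)\le 2\alpha\sqrt{\beta}\, d(x,y)$ for all $x,y\in X$. Then $T$ is a Banach contraction with constant $2\alpha\sqrt{\beta}$, and the hypothesis $\beta<\frac{1}{4\alpha^2}$ is exactly what makes this constant lie in $[0,1)$.

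First, since $\delta$ is a semimetric, $\delta(Tx,Ty)\ge 0$. Hence the left-hand side of~(\ref{am-gm-contr}) is at least $\frac{1}{2}d(Tx,Ty)$, and
$$
d(Tx,Ty)\le 2\alpha\big(d(x,y)\delta(x,y)\big)^{\frac12}.
$$
Second, we use the assumed bound $\delta(x,y)\le\beta d(x,y)$ inside the square root:
$$
\big(d(x,y)\delta(x,y)\big)^{\frac12}\le \big(\beta\, d(x,y)^2\big)^{\frac12}=\sqrt{\beta}\,d(x,y).
$$
This step is valid because $d\ge 0$ and the square root is monotone. Combining the two estimates gives $d(Tx,Ty)\le 2\alpha\sqrt\beta\, d(x,y)$.

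It remains to check that the constant lies in $[0,1)$. Since $\alpha>0$, the inequality $\beta<\frac{1}{4\alpha^2}$ is equivalent to $4\alpha^2\beta<1$, that is, $2\alpha\sqrt\beta<1$. Clearly $2\alpha\sqrt\beta\ge 0$. So $T$ satisfies the hypothesis of Theorem~\ref{ThBanach} with constant $2\alpha\sqrt\beta$, which is the claim.

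There is no real obstacle. The only point that needs care is the first step: we must discard $\delta(Tx,Ty)$ from the left side of~(\ref{am-gm-contr}), rather than try to bound it by $\beta d(Tx,Ty)$, which would go in the wrong direction. Discarding it costs the factor $2$, and that factor is exactly what produces the threshold $\frac{1}{4\alpha^2}$ on $\beta$.
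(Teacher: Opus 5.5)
Your proof is correct and follows the paper's argument exactly. You discard $\delta(Tx,Ty)\ge 0$ on the left of the contraction inequality to get $d(Tx,Ty)\le 2\alpha\big(d(x,y)\delta(x,y)\big)^{1/2}$, then insert $\delta(x,y)\le\beta d(x,y)$ to obtain the Banach constant $2\alpha\sqrt{\beta}<1$.
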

\begin{proof}
Let $T \colon   X \to  X $ be an A.M.-G.M. contraction with the contractions coefficient $\alpha$.

Clearly, the following inequality holds

    \begin{equation}\label{5}
        \frac{d(T  x, T  y)}{2} \le \frac{d(T  x, T  y) +\delta(T  x, T  y)}{2}.
    \end{equation}
It follows from~(\ref{am-gm-contr}) and \eqref{5} that
    \begin{equation}\label{6}
        \frac{d(T  x, T  y)}{2} \le \alpha \big(d(x, y)\delta(x,y)\big)^\frac{1}{2}.
    \end{equation}

    Since $\delta(x, y)\le \beta d(x, y)$, \eqref{6} becomes
    \begin{equation*}
        d(T  x, T  y) \le 2\alpha \beta^\frac{1}{2}d(x, y),
    \end{equation*}
    which completes the proof, since $0\leqslant 2\alpha \beta^\frac{1}{2} <1$.
\end{proof}
    
\begin{theorem}\label{am-gm-contr th}
    Let $( X ,d)$ be a complete metric space and $T  \colon   X  \rightarrow  X  $ be a continuous A.M.-G.M. contraction. Then $T$ is a Picard operator.
\end{theorem}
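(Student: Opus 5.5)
Our plan is to work with the quantity $\rho(x,y)=d(x,y)+\delta(x,y)$, which satisfies $\rho(x,y)=0$ iff $x=y$ and is symmetric, though it need not satisfy the triangle inequality. Applying inequality~(\ref{am-gm}) on the right of~(\ref{am-gm-contr}), every A.M.-G.M. contraction is an additive-type contraction: $\rho(Tx,Ty)\le\alpha\rho(x,y)$. Fix $x_0\in X$ and put $x_n=T^nx_0$. Iterating gives
\[
\rho(x_n,x_{n+1})\le \alpha^n\rho(x_0,x_1),
\]
and hence $d(x_n,x_{n+1})\le \alpha^n\rho(x_0,x_1)$, since $\delta\ge 0$.

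The key observation is that only the consecutive terms need to be controlled by the contraction. After that, the triangle inequality for $d$ (not for $\delta$) does the rest. For $m>n$,
\[
d(x_n,x_m)\le\sum_{k=n}^{m-1}d(x_k,x_{k+1})\le \frac{\alpha^n}{1-\alpha}\rho(x_0,x_1)\to 0.
\]
So $(x_n)$ is $d$-Cauchy and, by completeness, converges to some $u\in X$. Continuity of $T$ gives $x_{n+1}=Tx_n\to Tu$. Uniqueness of limits in the metric space then gives $Tu=u$. This is exactly where continuity is used: no triangle inequality is available for $\delta$, so the usual estimate of $d(u,Tu)$ through the contraction would fail.

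For uniqueness, suppose $Tu=u$ and $Tv=v$. Then $\rho(u,v)=\rho(Tu,Tv)\le\alpha\rho(u,v)$, which forces $\rho(u,v)=0$ and hence $u=v$. Since $x_0$ was arbitrary, every Picard sequence converges to the unique fixed point $u$, so $T$ is a Picard operator.

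The main obstacle is the Cauchy step. One might be tempted to use $\delta$ or $\rho$ along the sequence, but neither satisfies a triangle inequality. The remedy is to bound the $d$-increments alone by the geometric sequence $\alpha^n\rho(x_0,x_1)$ and sum these with the metric $d$. One should also check the degenerate case $\alpha=0$: then $Tx_0=Tx_1$, so $x_1$ is already fixed, and it is covered by the same argument.
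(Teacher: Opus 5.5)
Your proposal is correct and follows essentially the same route as the paper: apply the A.M.--G.M. inequality to the right-hand side to get $d(x_n,x_{n+1})+\delta(x_n,x_{n+1})\le\alpha^n\bigl(d(x_0,x_1)+\delta(x_0,x_1)\bigr)$, sum the $d$-increments using the triangle inequality for $d$ alone, then use continuity to get the fixed point and the same additive estimate to get uniqueness. You also spell out that every orbit converges to the unique fixed point, which the paper's proof leaves implicit.
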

\begin{proof}
    Suppose that $x_0 \in  X  $. We now define a sequence $(x_n)$ for all $n > 0$ by using $x_{n} = T^{n}x_0$. 
    	
Taking $x = x_0$ and $y = x_1$ in the inequality (\ref{am-gm-contr}), we have
\begin{equation*}
\begin{split}
    \frac{d(x_1,x_2)+\delta(x_1,x_2)}{2} \le \alpha  \big(d(x_0,x_1)\delta(x_0,x_1)\big)^\frac{1}{2}\le \alpha  \frac{d(x_0,x_1)+\delta(x_0,x_1)}{2} ,
    \end{split}
\end{equation*}
        which is equivalent to 
        \begin{equation}\label{2.2}
           d(x_1,x_2)+\delta(x_1,x_2) \le \alpha  \big(d(x_0,x_1)+\delta(x_0,x_1)\big).
        \end{equation}
        
         Taking $x = x_1$ and $y = x_2$ in the inequality (\ref{am-gm-contr}), we have
\begin{equation*}
\begin{split}
    \frac{d(x_2,x_3)+\delta(x_2,x_3)}{2} \le \alpha  \big(d(x_1,x_2)\delta(x_1,x_2)\big)^\frac{1}{2}\le  \alpha \frac{d(x_1,x_2)+\delta(x_1,x_2)}{2},
    \end{split}
\end{equation*}
        which is equivalent to 
        \begin{equation}\label{2.3}
           d(x_2,x_3)+\delta(x_2,x_3) \le \alpha  \big(d(x_1,x_2)+\delta(x_1,x_2)\big).
        \end{equation}

        From (\ref{2.2}) and (\ref{2.3}), we have
\begin{equation*}
           d(x_2,x_3)+\delta(x_2,x_3) \le \alpha ^2 \big(d(x_0,x_1)+\delta(x_0,x_1)\big).
        \end{equation*}

        Continuing in this way, we obtain 
\begin{equation}\label{2.4}
           d(x_n,x_{n+1})+\delta(x_n,x_{n+1}) \le \alpha ^n \big(d(x_0,x_1)+\delta(x_0,x_1)\big).
        \end{equation}

Using triangle inequality and (\ref{2.4}), we obtain that for all $n\ge 0$ and $m>n$,
\begin{equation*}
    \begin{split}
        d(x_n,x_m) \le& d(x_{n},x_{n+1})+d(x_{n+1},x_{n+2})+\dots +d(x_{m-1},x_{m})\\
        \le& d(x_{n},x_{n+1})+\delta(x_{n},x_{n+1})+d(x_{n+1},x_{n+2})+\delta(x_{n+1},x_{n+2})\\
        &+\dots+d(x_{m-1},x_{m})+\delta(x_{m-1},x_{m})\\
        \le& \alpha ^n \big(d(x_0,x_1)+\delta(x_0,x_1)\big)+\alpha ^{n+1}\big(d(x_0,x_1)+\delta(x_0,x_1)\big)\\
        &+\dots +\alpha ^{m-1}\big(d(x_0,x_1)+\delta(x_0,x_1)\big)\\
        =&\alpha ^n\left(1+\alpha +\alpha ^2+\dots \alpha ^{m-n-1}\right)\big(d(x_0,x_1)+\delta(x_0,x_1)\big)\\
        =& \alpha ^n\frac{1-\alpha ^{m-n}}{1-\alpha }\big(d(x_0,x_1)+\delta(x_0,x_1)\big).
    \end{split}
\end{equation*}

Since $0\le \alpha  <1$, we have $1-\alpha ^{m-n}<1$ and consequently 
\begin{equation}\label{e11}
    d(x_n,x_m) \le \alpha ^n\frac{1}{1-\alpha }\big(d(x_0,x_1)+\delta(x_0,x_1)\big).
\end{equation}

To demonstrate that the sequence $(x_n)$ is Cauchy based on this bound, select $\varepsilon>0$ and then select $N\ge 1$ so that $\frac{\alpha ^N}{1-\alpha }\big(d(x_0,x_1)+\delta(x_0,x_1)\big)< \varepsilon$. Then, for any $m>n\ge N$
       \begin{equation*}
           d(x_n,x_m)\le \frac{\alpha ^n}{1-\alpha }\big(d(x_0,x_1)+\delta(x_0,x_1)\big) \le \frac{\alpha ^N}{1-\alpha }\big(d(x_0,x_1)+\delta(x_0,x_1)\big)<\varepsilon.
       \end{equation*}

       This proves that $\{x_n\}$ is Cauchy. Since $( X ,d)$ is complete metric space, there exist $x^*\in  X  $ such that $x_n \rightarrow x^*$.

Since $T$ is continuous we have
$$
T  x^*=\lim_{n \to \infty} T  x_n= \lim_{n \to \infty} x_{n+1}=x^*.
$$
So, $x^*$ is a fixed point of $T $.

        Suppose there exist two different fixed points  $u$ and $v$ of $T $ such that $T  u=u\ne v=T  v$. Then, by~(\ref{am-gm-contr}) and by~(\ref{am-gm}) we have
        \begin{equation*}
            \begin{split}
                \frac{d(T  u,T  v)+\delta(T  u,T  v)}{2}\le& \alpha \big(d(u,v)\delta(u,v)\big)^\frac{1}{2}\\
                \le& \alpha  \frac{d(u,v)+\delta(u,v)}{2},
            \end{split}
        \end{equation*}
        which is equivalent to
        $$ d(u,v)+\delta(u,v)\le \alpha  \big(d(u,v)+\delta(u,v)\big).$$

        This implies that $d(u,v)+\delta(u,v)=0$ as $0\le \alpha <1$, which is a contradiction, since $d(u,v)\neq 0$. Hence, $T $ has a unique fixed point in $ X $.
\end{proof}
\begin{remark}
Letting $m\to \infty$ in~(\ref{e11}) we obtain the estimation of the distance from the fixed point $x^*$ to $n$-th iterate of the operator $T$:
$$
    d(x_n,x^*) \le \frac{\alpha ^n}{1-\alpha }\big(d(x_0,x_1)+\delta(x_0,x_1)\big).
$$
\end{remark}

In the following example, we show that in general case the class of mappings described in Theorem ~\ref{am-gm-contr th} is broader than the class of Banach contractions. 
\begin{example}\label{ex1}
 Let \(  X   = \{x_1, x_2, x_3\} \). Consider the  metric $d \colon   X   \times X   \to [0,\infty)$ defined by
$$
d(x_1,x_2)=3, \quad d(x_1,x_3)=4, \quad d(x_2,x_3)=7
$$
 and the mapping \( T  \colon   X   \to  X   \) defined by
\[
T  x_1 = x_1, \quad T  x_2 = x_3, \quad T  x_3 = x_1.
\]
Consider also  $\delta \colon   X   \times  X   \to [0,\infty)$ defined  by

\begin{equation}\label{e44}
\delta(x_1,x_2)=3, \quad \delta(x_1,x_3)=1, \quad \delta(x_2,x_3)=5.
\end{equation}

It is obvious that  $\delta$ is not a metric, as it fails to satisfy triangle inequality, i.e., $\delta(x_2,x_3)=5>4=3+1=\delta(x_2,x_1)+\delta(x_1,x_3)$. 
    
At first, we show that $T $ does not satisfy the Banach contraction.  Since $$d(T  x_1,T  x_2)=d(x_1,x_3)=4>3=d(x_1,x_2),$$ there exist no $\alpha  \in [0,1)$ satisfying Banach contraction.

Define the ratio
\begin{equation}\label{e22}
f(x,y)={\frac{d(Tx,Ty)+\delta(Tx,Ty)}{2}}:{\big(d(x,y)\delta(x,y)\big)^\frac{1}{2}}.
\end{equation}
Now, 

Case-1: For $(x,y)=(x_1,x_2)$ in (\ref{e22}), we have
    \begin{equation*}
        \begin{split}f(x_1,x_2)=&{\frac{d(x_1,x_3)+\delta(x_1,x_3)}{2}}:{\big(d(x_1,x_2)\delta(x_1,x_2)\big)^\frac{1}{2}}\\
        =& {\frac{4+1}{2}}:{(3\times3)^\frac{1}{2}}
        = \frac{5}{6}.
        \end{split}
    \end{equation*}

Case-2: For $(x,y)=(x_2,x_3)$ in (\ref{e22}), we have
    \begin{equation*}
        \begin{split}
f(x_2,x_3)=&{\frac{d(x_1,x_3)+\delta(x_1,x_3)}{2}}:{\big(d(x_2,x_3)\delta(x_2,x_3)\big)^\frac{1}{2}}\\
        =& {\frac{4+1}{2}}:{(7\times5)^\frac{1}{2}}=
        \frac{5}{2\sqrt{35}}.
        \end{split}
    \end{equation*}
    
Case-3:  For $(x,y)=(x_1,x_3)$ in (\ref{e22}), we have
    \begin{equation*}
f(x_1,x_3)={\frac{d(x_1,x_1)+\delta(x_1,x_1)}{2}}:{\big(d(x_1,x_3)\delta(x_1,x_3)\big)^\frac{1}{2}}=0.
    \end{equation*}

This shows that (\ref{am-gm-contr}) is satisfied for all $\frac{5}{6}\le \alpha \le 1$. Consequently, \( x_1 \) is the unique fixed point of \( T   \), validating our result.
\end{example}

Note that the choice of a particular $\delta$ is very important in Example~\ref{ex1} because any semimetric $\delta$ will not work. The $\delta$ given by~\eqref{e44} is specifically chosen to work with the mapping and metric space of the Example \ref{ex1} so that $T $ satisfies A.M.-G.M. contraction. As for instance, in Example \ref{ex1}, if we choose a different $\delta$, say
$$
\delta(x_1,x_2)=3, \quad \delta(x_1,x_3)=2, \quad \delta(x_2,x_3)=5.
$$
$T$ fails to satisfy A.M.-G.M. contraction because 
 $$    
 \frac{d(T  x_1,T  x_2)+\delta(T  x_1,T  x_2)}{2}= \frac{d(x_1,x_3)+\delta(x_1,x_3)}{2}
 $$
 $$
=\frac{4+2}{2}
=3
=(3\times3)^\frac{1}{2}
=\big(d(x_1,x_2)\delta(x_1,x_2)\big)^\frac{1}{2}.
$$
 The following simple example shows that Theorem~\ref{am-gm-contr th} works well also for Banach contractions. 
\begin{example}
Let $ X  = [0,1]$ and define $d(x,y) = |x - y|$. We consider a function $T \colon   X  \to  X $ defined by $T (x) = \frac{x}{2}$. Additionally, we define $\delta(x,y) = |x - y| + |T (x) - T (y)|$. It is clear that $\delta$ is a semimetric on $X\times X$. $T$ satisfies A.M.-G.M. contraction because 
    \begin{equation*}
            \frac{d(Tx, Ty) +\delta(Tx, T y)}{2} =\frac{5|x-y|}{8}=\frac{5}{4\sqrt{6}}\big(d(x,y)\delta(x,y)\big)^\frac{1}{2} .
    \end{equation*}
Hence, $x=0$ is the unique fixed point of $T$.
\end{example}

\begin{definition}
Let \( ( X ,d) \) be a metric space and  let  $\delta\colon  X\times X \to \mathbb{R}^+$ be a semimetric on $X$. 
We shall say that \emph{the semimetric $\delta$ is $d$-regular} if for every $x^*\in X$ and every sequence $(x_n)$ in $X$ such that $d(x_n,x^*)\to 0$ we have $d(x_n,x^*)\delta(x_n,x^*) \to 0$.
\end{definition}

\begin{example}\label{ex26}
It is clear that given metric space $(X,d)$ a semimetric   $\delta\colon  X\times X \to \mathbb{R}^+$ is $d$-regular in the following cases:
\begin{itemize}
  \item [(i)] The semimetric space $(X,\delta)$ is bounded.
  \item [(ii)]  For all $x, y \in  X $ the inequality $\delta(x, y)\le K d(x, y)$ holds with some  $K>0$.
  \item [(iii)]  $\delta(x, y)=\psi(d(x, y))$, where $\psi\colon [0,\infty)\to [0,\infty)$ is a continuous function at $0$ such that $\psi(0)=0$.

\end{itemize}
\begin{remark}
  We need the condition $\psi(0)=0$ in order to guarantee that $\delta(x,x)=0$.
\end{remark}
\end{example}

\begin{lemma}\label{lem2.1}
Let $(X ,d)$ be a metric space and $(X,\delta)$ be a semimetric space. Then any A.M.-G.M. contraction $T\colon X \to X$ is continuous if the semimetric $\delta$ is $d$-regular.
\end{lemma}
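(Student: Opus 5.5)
We use sequential continuity, which suffices in the metric space $(X,d)$. Fix $x^*\in X$ and a sequence $(x_n)$ with $d(x_n,x^*)\to 0$. The goal is to show $d(Tx_n,Tx^*)\to 0$.

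The key step is to apply the defining inequality~(\ref{am-gm-contr}) with $x=x_n$ and $y=x^*$. Since $\delta\ge 0$, the left-hand side dominates $\frac{1}{2}d(Tx_n,Tx^*)$, exactly as in~(\ref{5}) in the proof of Proposition~\ref{p23}. This gives
\begin{equation*}
d(Tx_n,Tx^*)\le d(Tx_n,Tx^*)+\delta(Tx_n,Tx^*)\le 2\alpha\big(d(x_n,x^*)\delta(x_n,x^*)\big)^{\frac{1}{2}}.
\end{equation*}

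Now we use the hypothesis. Because $\delta$ is $d$-regular and $d(x_n,x^*)\to 0$, we have $d(x_n,x^*)\delta(x_n,x^*)\to 0$. The square root is continuous at $0$, so the right-hand side tends to $0$. Hence $d(Tx_n,Tx^*)\to 0$, that is, $Tx_n\to Tx^*$. As $x^*$ and $(x_n)$ were arbitrary, $T$ is continuous.

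There is no real obstacle here. The only point needing care is that no bound on $\delta(x_n,x^*)$ by itself is available, since $\delta$ need not be small even when $d$ is. That is exactly why the product form of $d$-regularity is the right hypothesis: it controls precisely the geometric-mean term on the right of~(\ref{am-gm-contr}). As a by-product, the same estimate shows that $\delta(Tx_n,Tx^*)\to 0$ as well.
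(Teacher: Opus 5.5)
Your proof is correct and follows the paper's argument essentially verbatim: drop the nonnegative term $\delta(Tx_n,Tx^*)$ from the left side of the A.M.-G.M. contraction inequality to get $d(Tx_n,Tx^*)\le 2\alpha\bigl(d(x_n,x^*)\delta(x_n,x^*)\bigr)^{1/2}$, then let $n\to\infty$ using $d$-regularity. Your side remark that $\delta(Tx_n,Tx^*)\to 0$ also follows from the same estimate is correct.
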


\begin{proof}
Let $(x_n)$ be a sequence in $ X $ such that $d(x_n, x^*) \to 0$. 
Now, using (\ref{am-gm-contr}) we get
	\begin{equation*}
		\frac{d(T  x_n,T  x^*)}{2} \le\frac{d(T  x_n,T  x^*)+\delta(T  x_n,T  x^*)}{2} \le \alpha  \left(d(x_n,x^*)\delta(x_n,x^*)\right)^\frac{1}{2}.
	\end{equation*}
Hence, 
    \begin{equation*}
		d(T  x_n,T  x^*) \le 2\alpha  \left(d(x_n,x^*)\delta(x_n,x^*)\right)^\frac{1}{2}.
	\end{equation*}
Letting $n \rightarrow \infty$, by $d$-regularity of the semimetric $\delta$, we get $d(T  x_n,T  x^*) \rightarrow 0$, which completes the proof.
\end{proof}

The continuity condition of T in Theorem~\ref{am-gm-contr th} may be replaced by the $d$-regularity of the semimetric $\delta$.
\begin{corollary}
Let $(X, d)$ be a metric space, $(X, \delta)$ be a semimetric space, and let $T\colon X \to X$ be an A.M.-G.M. contraction. Then $T$ is a Picard operator provided that $\delta$ is $d$-regular.
\end{corollary}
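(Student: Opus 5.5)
The plan is to reduce the statement to Theorem~\ref{am-gm-contr th} by supplying its only missing hypothesis, continuity of $T$, through Lemma~\ref{lem2.1}. Since the statement is meant to be the version of Theorem~\ref{am-gm-contr th} in which continuity is replaced by $d$-regularity, I read it as tacitly assuming, as in that theorem, that $(X,d)$ is \emph{complete}.

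Completeness really is necessary. Take $X=(0,1]$ with the usual metric, $\delta=d$ (which is $d$-regular by Example~\ref{ex26}(ii)), and $Tx=x/2$. Then $T$ is an A.M.-G.M. contraction with coefficient $1/2$, but it has no fixed point. So the corollary should be proved in the form ``let $(X,d)$ be a complete metric space''.

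First, by Lemma~\ref{lem2.1}, the $d$-regularity of $\delta$ makes $T$ continuous with respect to $d$. The mechanism is the inequality
$$
d(Tx_n,Tx^*)\le 2\alpha\bigl(d(x_n,x^*)\delta(x_n,x^*)\bigr)^{1/2},
$$
whose right-hand side tends to $0$ whenever $d(x_n,x^*)\to 0$.

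Second, with $T$ now known to be a continuous A.M.-G.M. contraction on a complete metric space, Theorem~\ref{am-gm-contr th} applies directly and gives that $T$ is a Picard operator. It is worth recalling what that proof uses. The estimate
$$
d(x_n,x_{n+1})+\delta(x_n,x_{n+1})\le \alpha^n\bigl(d(x_0,x_1)+\delta(x_0,x_1)\bigr)
$$
makes the Picard iterates $d$-Cauchy. Completeness then yields a limit $x^*$. Continuity gives $Tx^*=x^*$. Uniqueness follows from the AM--GM step applied to two fixed points.

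There is no real obstacle beyond this bookkeeping. The one point needing care is the hypothesis mismatch: continuity is used only in the single passage $Tx^*=\lim Tx_n$. Alternatively, one could avoid quoting Lemma~\ref{lem2.1} and verify that passage directly by the displayed inequality with $x_n\to x^*$, which makes $d(x_{n+1},Tx^*)\to0$ and hence $Tx^*=x^*$ by uniqueness of limits. Either route finishes the proof once completeness is assumed.
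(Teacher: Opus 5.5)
Your argument is the paper's: Lemma~\ref{lem2.1} turns the $d$-regularity of $\delta$ into continuity of $T$, and Theorem~\ref{am-gm-contr th} then gives the Picard property. You are also right that completeness of $(X,d)$ is tacitly needed and missing from the printed statement; your example $Tx=x/2$ on $(0,1]$ with $\delta=d$ correctly shows the statement fails without it.
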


Condition (iii) of Example~\ref{ex26} and Lemma~\ref{lem2.1} lead immediately to the following corollary of Theorem~\ref{am-gm-contr th}. 

\begin{corollary}\label{c210}
Let $( X ,d)$ be a complete metric space and $T  \colon   X  \rightarrow  X  $ be a mapping such that
\begin{equation}\label{e12}
\frac{d(T  x, T  y)+\psi(d(T  x,T  y))}{2} \leq \alpha  \big(d(x, y)\psi(d(x,y))\big)^\frac{1}{2},
\end{equation}
 for all   $x, y \in  X  $ and some $0\leq\alpha< 1$, where $\psi\colon [0,\infty)\to [0,\infty)$ is a continuous function at $0$ such that $\psi(0)=0$. Then $T$ is a Picard operator.
\end{corollary}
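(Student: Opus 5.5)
The plan is to reduce Corollary~\ref{c210} to Theorem~\ref{am-gm-contr th} by taking the auxiliary semimetric to be $\delta(x,y):=\psi(d(x,y))$.

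\textbf{Step 1: $\delta$ is a semimetric.} Symmetry is inherited from $d$. The condition $\psi(0)=0$ gives $\delta(x,x)=0$.

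There is one subtlety here. The definition of a semimetric requires $\delta(x,y)=0$ only when $x=y$, but $\psi$ may vanish at positive arguments. I would not need the full separation axiom, however. An inspection of the proofs of Theorem~\ref{am-gm-contr th} and Lemma~\ref{lem2.1} shows they use only three properties of $\delta$: nonnegativity, symmetry, and $\delta(x,x)=0$. In the uniqueness part, the contradiction comes from $d(u,v)+\delta(u,v)=0$ forcing $d(u,v)=0$, so positivity of $\delta$ is never used. I would either note this explicitly, or read the statement as implicitly assuming $\psi(t)>0$ for $t>0$.

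\textbf{Step 2: $T$ is an A.M.-G.M. contraction.} With this choice of $\delta$, inequality \eqref{e12} is literally inequality \eqref{am-gm-contr}.

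\textbf{Step 3: $\delta$ is $d$-regular.} This is case (iii) of Example~\ref{ex26}, and I would verify it directly. If $d(x_n,x^*)\to0$, then continuity of $\psi$ at $0$ together with $\psi(0)=0$ gives $\psi(d(x_n,x^*))\to 0$. Hence the product $d(x_n,x^*)\,\psi(d(x_n,x^*))\to0$.

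\textbf{Step 4: continuity of $T$.} By Lemma~\ref{lem2.1}, $T$ is continuous. Concretely, $d(Tx_n,Tx^*)\le 2\alpha\big(d(x_n,x^*)\psi(d(x_n,x^*))\big)^{1/2}\to0$.

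\textbf{Step 5: conclusion.} Since $(X,d)$ is complete and $T$ is a continuous A.M.-G.M. contraction, Theorem~\ref{am-gm-contr th} shows that $T$ is a Picard operator.

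The only real obstacle is the semimetric issue in Step~1. Every other step is a direct appeal to earlier results.
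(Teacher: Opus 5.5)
Your proposal is correct and is essentially the paper's argument: take $\delta=\psi\circ d$, get $d$-regularity from Example~\ref{ex26}(iii), continuity from Lemma~\ref{lem2.1}, and conclude with Theorem~\ref{am-gm-contr th}. Your remark about the separation axiom is a real point the paper passes over (its own example with $\psi(t)=4t(t-2)^2$ gives $\delta(b,c)=\psi(2)=0$ for $b\neq c$), and your resolution is sound, since the cited proofs never use that $\delta$ separates points and the uniqueness step only needs $\delta\ge 0$ and $d(u,v)\neq 0$.
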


Inequality~(\ref{e12}) yields its most meaningful form when $\psi$ is chosen as a power function, $\psi(x)=kx^{\gamma}$, $k>0$, $\gamma>0$. In the partial cases $k=1$ and $\gamma=1,3,5,...$ inequality~(\ref{e12}) has the most elegant form.

In the case $k=\gamma=1$ we have the following.
\begin{corollary}(Banach's fixed point theorem) Theorem~\ref{ThBanach} holds.
\end{corollary}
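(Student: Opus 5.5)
The plan is to derive Theorem~\ref{ThBanach} from Corollary~\ref{c210} by taking $\psi(t)=t$, so that $\psi$ is continuous at $0$ with $\psi(0)=0$, and then showing that every Banach contraction satisfies inequality~(\ref{e12}) with the same coefficient. With this choice, $\delta(x,y)=\psi(d(x,y))=d(x,y)$ is a semimetric, in fact the metric $d$ itself. By item (iii) of Example~\ref{ex26} it is $d$-regular, which is exactly what Corollary~\ref{c210} uses, via Lemma~\ref{lem2.1}, to provide continuity.

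The key step is a direct computation. With $\psi(t)=t$ the left-hand side of (\ref{e12}) becomes
\[
\frac{d(Tx,Ty)+d(Tx,Ty)}{2}=d(Tx,Ty),
\]
and the right-hand side becomes
\[
\alpha\big(d(x,y)\,d(x,y)\big)^{\frac12}=\alpha\, d(x,y),
\]
using $d(x,y)\ge 0$. Hence (\ref{e12}) is literally the Banach condition $d(Tx,Ty)\le\alpha d(x,y)$ with the same $\alpha\in[0,1)$. Every mapping satisfying the hypothesis of Theorem~\ref{ThBanach} therefore satisfies the hypothesis of Corollary~\ref{c210} on the complete metric space $(X,d)$, and so $T$ is a Picard operator.

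There is essentially no obstacle. The only point needing care is to confirm that Corollary~\ref{c210} requires no continuity of $T$ beyond what Lemma~\ref{lem2.1} supplies from $d$-regularity of $\delta=\psi\circ d$. For the particular $\psi(t)=t$ this is immediate: $d(x_n,x^*)\to0$ gives $d(x_n,x^*)^2\to0$. If one prefers to avoid the corollary, one can instead note directly that $T$ is an A.M.-G.M. contraction with $\delta=d$, as in the Remark following the definition. Such a $T$ is Lipschitz and hence continuous, and Theorem~\ref{am-gm-contr th} then applies.
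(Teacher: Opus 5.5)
Your proposal is correct and matches the paper: the corollary is stated as the case $k=\gamma=1$ of Corollary~\ref{c210}, i.e.\ $\psi(t)=t$, and the paper's written proof is exactly your alternative of setting $\delta=d$ in Theorem~\ref{am-gm-contr th}. Your remark that a Banach contraction is Lipschitz, hence continuous, makes explicit the continuity hypothesis that the paper's one-line proof leaves implicit.
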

\begin{proof}
Alternatively, it suffices to set $\delta=d$ in Theorem~\ref{am-gm-contr th}.
\end{proof}

In the case $\gamma =3$ inequality~(\ref{e12}) is 

\begin{equation}\label{e91}
\frac{d(T  x, T  y)+kd^3(T  x,T  y)}{2} \leq \alpha \sqrt{k} d^2(x, y).
\end{equation}

It is not hard to see that the mappings defined by inequality~(\ref{e91}) is Banach contraction with the coefficient $\sqrt{\alpha}$. Indeed, applying~(\ref{am-gm}) to the left hand side of~(\ref{e91}) we get
$$
\sqrt{kd(T  x, T  y)d^3(T  x,T  y)}\leq \alpha \sqrt{k} d^2(x, y)
$$
or
$$
d^2(T  x,T  y)\leq \alpha d^2(x, y)
$$
and
$$
d(T  x,T  y)\leq \sqrt\alpha d(x, y).
$$

In the case $\gamma =5$ we have 
\begin{equation}\label{e92}
\frac{d(T  x, T  y)+kd^5(T  x,T  y)}{2} \leq \alpha \sqrt{k} d^3(x, y), \,\, \text{etc}.
\end{equation}
Analogously, $T$ defined by~(\ref{e92}) is a  Banach contraction with the coefficient $\sqrt[3]{\alpha}$.

Inductively, the mapping $T$ becomes a Banach contraction with contraction coefficient $\sqrt[n]{\alpha}$ whenever $\gamma=2n-1$.
 
In the following example we construct a mapping $T$ that satisfies inequality~(\ref{e12}) with some $\psi$ but is not a Banach contraction.

\begin{example}
Let $(X,d)$ be a metric space such that $X = \{a, b, c\}$ and
\[
    d(a, b) = 1, \quad d(b, c) = 2, \quad d(a, c) = 3.
\]
Define $T : X \to X$  as
\[
    T(a) = b, \quad T(b) = c, \quad T(c) = c.
\]

First, we observe that $T$ does not satisfy the Banach Contraction Principle since for the pair $(a, b)$, the distance strictly increases:
$d(Ta, Tb) = d(b, c) = 2 > 1 = d(a, b)$.

Now, we show that $T$ nonetheless satisfies the conditions of Corollary 2.11. Let $\alpha = 0.5 \in [0, 1)$, and define the function $\psi : [0, \infty) \to [0, \infty)$ such that
\[
    \psi(t) = 4t(t - 2)^2.
\]

By definition, $\psi(0) = 0$ and $\psi$ is continuous at $t = 0$. We must verify that inequality~(\ref{e12}) for all $x, y \in X$. 
Since the inequality is trivially satisfied when $x = y$, we only need to check the distinct pairs:

 For $(x, y) = (a, b)$:
    \begin{multline*}
    1= \frac{2 + 0}{2} = \frac{d(b,c) + \psi(2)}{2} =
        \frac{d(Ta, Tb) + \psi(d(Ta, Tb))}{2}  \\
        \leqslant\alpha(d(a, b)\psi(d(a, b)))^{\frac{1}{2}} = 0.5(1 \cdot 4)^{\frac{1}{2}} = 1.
    \end{multline*}

 For $(x, y) = (a, c)$:
    \begin{multline*}
    1 = \frac{2 + 0}{2} = 
    \frac{d(b, c) + \psi(2)}{2}=
        \frac{d(Ta, Tc) + \psi(d(Ta, Tc))}{2}  \\
        \leqslant\alpha(d(a, c)\psi(d(a, c)))^{\frac{1}{2}} = 0.5(3 \cdot 12)^{\frac{1}{2}} = 0.5(36)^{\frac{1}{2}} = 3.
    \end{multline*}

 For $(x, y) = (b, c)$:
    \begin{multline*}
    0=
    \frac{0 + 0}{2} 
     = \frac{d(c, c) + \psi(0)}{2}= 
        \frac{d(Tb, Tc) + \psi(d(Tb, Tc))}{2} \\
        \leqslant\alpha(d(b, c)\psi(d(b, c)))^{\frac{1}{2}} = 0.5(2 \cdot 0)^{\frac{1}{2}} = 0.
    \end{multline*}
 \end{example}

\begin{problem}\label{op213}
Is it possible to construct a similar example of a mapping $T$ on the space $X$ containing accumulation points?
\end{problem}

\begin{problem}\label{op214}
  Under which additional conditions on the space $(X,d)$ or on the function $\psi$ the mapping $T$ in Corollary~\ref{c210} is reduced to the Banach contraction?
\end{problem}

\begin{problem}\label{op215}
Let $(X,d)$ be a complete metric space and let $T  \colon   X  \rightarrow  X $ be a Picard operator.
Does there always exist a semimetric $\delta$ which is $d$-regular such that the mapping $T$ is an A.M.-G.M. contraction? If not, under what additional conditions does such a semimetric $\delta$ exist? 
\end{problem}

\begin{problem}\label{op216}
Can Theorem~\ref{am-gm-contr th} be extended to means other than the arithmetic and geometric means?
\end{problem}

\section{Additive-type contractions}

In this section, we investigate the properties of additive type contractions and establish a fixed-point theorem for this class of mappings.

\begin{proposition}\label{prop_additive_banach}
Let $(X, d)$ be a metric space. Then, every additive-type contraction with the coefficient $\alpha \in (0,1)$ is a Banach contraction if for all $x, y \in X$ the inequality $\delta(x,y) \le \beta d(x,y)$ holds with $0 \le \beta < \frac{1-\alpha}{\alpha}$.
\end{proposition}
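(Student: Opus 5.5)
Following the pattern of Proposition~\ref{p23}, the plan is to bound the left-hand side of~(\ref{additive-contr}) from below by $d(Tx,Ty)$ and the right-hand side from above using the hypothesis $\delta\le\beta d$. This gives a linear inequality in $d$ alone. Fix $x,y\in X$. Since $\delta$ is nonnegative, we have
\[
d(Tx,Ty)\le d(Tx,Ty)+\delta(Tx,Ty)\le \alpha\big(d(x,y)+\delta(x,y)\big).
\]
Next I apply $\delta(x,y)\le\beta d(x,y)$ to the right-hand side, which yields
\[
d(Tx,Ty)\le \alpha(1+\beta)\,d(x,y).
\]

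It then only remains to check that the constant $\alpha(1+\beta)$ lies in $[0,1)$. Since $\alpha\in(0,1)$ and $\beta\ge 0$, the constant is nonnegative. The assumption $\beta<\frac{1-\alpha}{\alpha}$ is equivalent, after multiplying by $\alpha>0$, to $\alpha\beta<1-\alpha$, that is, $\alpha(1+\beta)<1$. Hence $T$ is a Banach contraction with coefficient $\alpha(1+\beta)$.

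There is no real obstacle here. The only point needing care is that the hypothesis $\alpha>0$ is used when dividing to express the bound on $\beta$. For $\alpha=0$ the claim is trivial anyway, since then $d(Tx,Ty)=0$.
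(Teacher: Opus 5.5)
Your proof is correct and is essentially the paper's argument: drop the nonnegative term $\delta(Tx,Ty)$ and substitute $\delta(x,y)\le\beta d(x,y)$ to get $d(Tx,Ty)\le\alpha(1+\beta)d(x,y)$ with $\alpha(1+\beta)<1$. You additionally spell out the equivalence $\beta<\frac{1-\alpha}{\alpha}\iff\alpha(1+\beta)<1$ (using $\alpha>0$), which the paper leaves implicit.
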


\begin{proof}
Let $T: X \to X$ be an additive-type contraction with the contraction coefficient $\alpha$. The following  inequality follows directly from~(\ref{additive-contr})
\begin{equation*}
    d(Tx,Ty) \le \alpha(d(x,y)+\delta(x,y)).
\end{equation*}
Since $\delta(x,y) \le \beta d(x,y)$, the inequality becomes
\begin{equation*}
    d(Tx,Ty) \le \alpha(d(x,y)+\beta d(x,y)) = \alpha(1+\beta)d(x,y),
\end{equation*}
which completes the proof, since $0 \le \alpha(1+\beta) < 1$.
\end{proof}

\begin{theorem}\label{thm_additive_picard}
Let $(X, d)$ be a complete metric space and $T: X \to X$ be an additive-type contraction. If $T$ is continuous in $(X,d)$, then $T$ is a Picard operator.
\end{theorem}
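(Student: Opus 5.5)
The plan is to repeat the Picard iteration argument from the proof of Theorem~\ref{am-gm-contr th}. The additive-type condition~(\ref{additive-contr}) gives exactly the key inequality~(\ref{2.2}) that was obtained there, and it does so directly, without the A.M.-G.M. step. Fix $x_0\in X$ and set $x_n=T^nx_0$. Write
\[
D_n=d(x_n,x_{n+1})+\delta(x_n,x_{n+1}).
\]
Applying~(\ref{additive-contr}) with $x=x_{n-1}$, $y=x_n$ gives $D_n\le \alpha D_{n-1}$. By induction,
\[
D_n\le \alpha^n D_0 .
\]

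For the Cauchy property, I use the triangle inequality for $d$ together with $d\le d+\delta$ (since $\delta\ge 0$). For $m>n$ this gives
\[
d(x_n,x_m)\le \sum_{k=n}^{m-1} D_k\le \frac{\alpha^n}{1-\alpha}\,D_0 ,
\]
which is the same bound as~(\ref{e11}). Hence $(x_n)$ is Cauchy in $(X,d)$, and completeness yields some $x^*\in X$ with $d(x_n,x^*)\to 0$. No triangle inequality for $\delta$ is ever needed, because $\delta$ only enters through the consecutive-term sums $D_k$. Continuity of $T$ in $(X,d)$ then gives
\[
Tx^*=\lim_{n\to\infty} Tx_n=\lim_{n\to\infty} x_{n+1}=x^*,
\]
using uniqueness of limits in the metric space.

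For uniqueness, suppose $u\ne v$ are both fixed points. Then~(\ref{additive-contr}) reads
\[
d(u,v)+\delta(u,v)\le \alpha\bigl(d(u,v)+\delta(u,v)\bigr).
\]
Since $\alpha<1$, this forces $d(u,v)+\delta(u,v)=0$, so $d(u,v)=0$, a contradiction. Since $x_0$ was arbitrary and every orbit converges to the unique fixed point, $T$ is a Picard operator.

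I expect no real obstacle here; the argument is simpler than that of Theorem~\ref{am-gm-contr th}. The only point needing care is that the limit point is taken in $d$, so continuity must be with respect to $d$, as assumed. One could also remark that $d(x_n,x^*)\le \frac{\alpha^n}{1-\alpha}D_0$ by letting $m\to\infty$.
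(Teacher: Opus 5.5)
Your proof is correct and takes the same approach as the paper, which only says that the argument is analogous to that of Theorem~\ref{am-gm-contr th}. You use the additive condition to get the geometric decay of $d(x_n,x_{n+1})+\delta(x_n,x_{n+1})$ directly, without the A.M.-G.M. step, and then carry over the Cauchy estimate, the continuity step and the uniqueness argument unchanged, which supplies the details the paper leaves out.
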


\begin{proof}
The proof is analogous to the proof of Theorem~\ref{am-gm-contr th}. 
\end{proof}

\begin{definition}\label{def_strongly_d_regular}
Let $(X, d)$ be a metric space and let $\delta\colon X \times X \to \mathbb{R}^+$ be a semimetric on $X$. We shall say that the semimetric $\delta$ is \textit{strongly d-regular} if for every $x^* \in X$ and every sequence $(x_n)$ in $X$ such that $d(x_n,x^*) \to 0$, we have $\delta(x_n,x^*) \to 0$.
\end{definition}

\begin{remark}\label{rem34}
It is clear that given a metric space $(X, d)$, a semimetric $\delta: X \times X \to \mathbb{R}^+$ is strongly $d$-regular if conditions (ii) and (iii) of Example~\ref{ex26} hold.
\end{remark}

\begin{lemma}\label{lem_additive_continuous}
Let $(X,d)$ be a metric space and $(X, \delta)$ be a semimetric space. Then any additive-type contraction $T\colon X \to X$ is continuous if the semimetric $\delta$ is strongly $d$-regular.
\end{lemma}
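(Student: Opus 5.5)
The plan is to copy the argument of Lemma~\ref{lem2.1}, using the additive inequality~(\ref{additive-contr}) in place of~(\ref{am-gm-contr}). We prove sequential continuity in $(X,d)$. This suffices because $(X,d)$ is a metric space, so sequential continuity and continuity coincide.

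Fix $x^*\in X$ and a sequence $(x_n)$ with $d(x_n,x^*)\to 0$. First, apply~(\ref{additive-contr}) with $x=x_n$ and $y=x^*$. Since $\delta\ge 0$, dropping the $\delta(Tx_n,Tx^*)$ term on the left gives
\[
d(Tx_n,Tx^*)\le d(Tx_n,Tx^*)+\delta(Tx_n,Tx^*)\le \alpha\bigl(d(x_n,x^*)+\delta(x_n,x^*)\bigr).
\]

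Second, let $n\to\infty$. The term $d(x_n,x^*)$ tends to $0$ by assumption. The term $\delta(x_n,x^*)$ tends to $0$ precisely by the strong $d$-regularity of $\delta$ (Definition~\ref{def_strongly_d_regular}). Hence $d(Tx_n,Tx^*)\to 0$, which is continuity of $T$ at $x^*$. As a by-product, the same inequality also gives $\delta(Tx_n,Tx^*)\to 0$.

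There is no genuine obstacle. The only point worth flagging is why strong $d$-regularity is needed rather than mere $d$-regularity. In Lemma~\ref{lem2.1} only the product $d\delta$ had to vanish, whereas here $\delta(x_n,x^*)$ enters additively. An unbounded $\delta$ with $d\delta\to 0$ but $\delta\not\to 0$ would therefore break the estimate.
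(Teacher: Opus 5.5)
Your proof is correct and is exactly the paper's argument: apply \eqref{additive-contr} to the pair $(x_n,x^*)$, drop the nonnegative term $\delta(Tx_n,Tx^*)$ on the left, and use strong $d$-regularity to get $\delta(x_n,x^*)\to 0$, hence $d(Tx_n,Tx^*)\to 0$. Your extra observations are correct but not needed for the statement: that sequential continuity suffices in a metric space, and that $\delta(Tx_n,Tx^*)\to 0$ as well.
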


\begin{proof}
Let $(x_n)$ be a sequence in $X$ such that $d(x_n,x^*) \to 0$. From~(\ref{additive-contr}) we get
\begin{equation*}
    d(Tx_n,Tx^*) \le d(Tx_n,Tx^*)+\delta(Tx_n,Tx^*) \le \alpha(d(x_n,x^*)+\delta(x_n,x^*)).
\end{equation*}
Letting $n \to \infty$, by the strong $d$-regularity of the semimetric $\delta$, we get $\delta(x_n,x^*) \to 0$. Consequently,  $d(Tx_n,Tx^*) \to 0$, which completes the proof.
\end{proof}

The continuity condition of $T$ in Theorem~\ref{thm_additive_picard} may be replaced by the
strong $d$-regularity of the semimetric $\delta$.

\begin{corollary}
Let $(X, d)$ be a metric space, $(X, \delta)$ be a semimetric space, and let $T\colon X \to X$ be an A.M.-G.M. contraction. Then $T$ is a Picard operator provided that $\delta$ is strongly $d$-regular.
\end{corollary}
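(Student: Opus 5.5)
The statement concerns an A.M.-G.M. contraction $T$ with a strongly $d$-regular $\delta$. My plan is to reduce it to the additive-type results of this section. The hypotheses omit completeness of $(X,d)$, but completeness is clearly needed (as in Theorem~\ref{am-gm-contr th} and Theorem~\ref{thm_additive_picard}), so I read the statement with $(X,d)$ complete.

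\textbf{Step 1: $T$ is an additive-type contraction.} Let $T$ satisfy (\ref{am-gm-contr}) with coefficient $\alpha\in[0,1)$ and semimetric $\delta$. Bound the right-hand side of (\ref{am-gm-contr}) using (\ref{am-gm}):
$$\alpha\big(d(x,y)\delta(x,y)\big)^{1/2}\le \alpha\,\frac{d(x,y)+\delta(x,y)}{2}.$$
Multiplying by $2$ gives (\ref{additive-contr}) with the same $\alpha$ and the same $\delta$. This is exactly the inclusion noted after Definition~\ref{d16}.

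\textbf{Step 2: $T$ is continuous.} Since $\delta$ is strongly $d$-regular, Lemma~\ref{lem_additive_continuous} applies to the additive-type contraction $T$, so $T$ is continuous on $(X,d)$. Alternatively, $\delta(x_n,x^*)\to0$ together with $d(x_n,x^*)\to 0$ gives $d(x_n,x^*)\delta(x_n,x^*)\to0$. So strong $d$-regularity implies $d$-regularity, and Lemma~\ref{lem2.1} yields continuity directly.

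\textbf{Step 3: conclude.} Theorem~\ref{thm_additive_picard}, or equally Theorem~\ref{am-gm-contr th}, now shows that $T$ is a Picard operator.

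The only delicate point is the missing completeness assumption. Without it the conclusion fails: take $T(x)=x/2$ on $(0,1]$ with $\delta=d$. So the proof must use completeness of $(X,d)$, as the preceding corollaries implicitly do. Everything else is a direct chaining of earlier results.
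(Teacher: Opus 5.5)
Your argument is correct and is essentially the paper's intended one. The corollary is stated without proof right after the remark that continuity in Theorem~\ref{thm_additive_picard} may be replaced by strong $d$-regularity; so continuity comes from Lemma~\ref{lem_additive_continuous}, which applies because every A.M.-G.M. contraction is an additive-type contraction, and the conclusion follows from Theorem~\ref{thm_additive_picard}.

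You are also right that completeness of $(X,d)$ is tacitly needed. The statement omits it, as does the analogous corollary in Section~\ref{sec2}, and your example $T(x)=x/2$ on $(0,1]$ with $\delta=d$ shows that the statement fails without it.
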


Remark~\ref{rem34}, condition (iii) of Example~\ref{ex26} and Lemma~\ref{lem_additive_continuous} lead immediately to the following.

\begin{corollary}\label{cor_additive_psi}
Let $(X, d)$ be a complete metric space and $T: X \to X$ be a mapping such that
\begin{equation}\label{eq_additive_psi}
    d(Tx,Ty)+\psi(d(Tx,Ty)) \le \alpha(d(x,y)+\psi(d(x,y)))
\end{equation}
for all $x, y \in X$ and some $0 \le \alpha < 1$, where $\psi\colon [0,\infty) \to [0,\infty)$ is a continuous function at $0$ such that $\psi(0)=0$. Then $T$ is a Picard operator.
\end{corollary}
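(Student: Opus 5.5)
The plan is to reduce Corollary~\ref{cor_additive_psi} to Theorem~\ref{thm_additive_picard} by taking $\delta(x,y)=\psi(d(x,y))$.

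\textbf{Step 1: $\delta$ is an admissible semimetric.} Define $\delta\colon X\times X\to\mathbb{R}^+$ by $\delta(x,y)=\psi(d(x,y))$. Symmetry follows from that of $d$, and $\delta(x,x)=\psi(0)=0$. The subtle point is the semimetric axiom (i), which requires $\delta(x,y)=0$ only when $x=y$. This fails if $\psi$ vanishes at some $t>0$.

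I would bypass this in one of two ways. Reading the proofs of Theorem~\ref{am-gm-contr th} and Theorem~\ref{thm_additive_picard}, the only properties of $\delta$ actually used are nonnegativity, symmetry and $\delta(x,x)=0$; the uniqueness argument uses only $d(u,v)\neq0$. Alternatively, replace $\delta$ by $\tilde\delta(x,y)=\psi(d(x,y))+\varepsilon d(x,y)$ for small $\varepsilon>0$. I prefer the first route and would state it explicitly: the arguments go through verbatim for such a $\delta$.

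\textbf{Step 2: $T$ is an additive-type contraction.} With this $\delta$, inequality \eqref{eq_additive_psi} is exactly \eqref{additive-contr}.

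\textbf{Step 3: $T$ is continuous.} Since $\psi$ is continuous at $0$ with $\psi(0)=0$, $d(x_n,x^*)\to0$ gives $\delta(x_n,x^*)=\psi(d(x_n,x^*))\to0$. So $\delta$ is strongly $d$-regular, as in Remark~\ref{rem34} via condition (iii) of Example~\ref{ex26}. Lemma~\ref{lem_additive_continuous} then yields continuity of $T$ in $(X,d)$.

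\textbf{Step 4: conclusion.} Theorem~\ref{thm_additive_picard} applies: the iterates $T^nx_0$ form a $d$-Cauchy sequence, because $d(x_n,x_{n+1})\le \alpha^n(d(x_0,x_1)+\delta(x_0,x_1))$. By completeness they converge, and by continuity the limit is a fixed point. Uniqueness holds since $d(u,v)+\delta(u,v)\le\alpha(d(u,v)+\delta(u,v))$ forces $d(u,v)=0$.

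The main obstacle is Step 1, the possible failure of $\delta(x,y)>0$ for $x\ne y$. Everything else is immediate from the cited results.
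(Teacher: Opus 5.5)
Your proposal is correct and follows the paper's own argument: the paper obtains the corollary in one line by taking $\delta=\psi\circ d$, using condition (iii) of Example~\ref{ex26} and Remark~\ref{rem34} for strong $d$-regularity, Lemma~\ref{lem_additive_continuous} for continuity, and then Theorem~\ref{thm_additive_picard}. You also point out that $\psi\circ d$ may vanish for $x\neq y$, which is a real issue the paper passes over (its own example with $\psi(1)=100$ and $\psi=0$ elsewhere gives $\delta(b,c)=0$). Your first route handles it soundly, since positivity of $\delta$ off the diagonal is never used in Lemma~\ref{lem_additive_continuous} or Theorem~\ref{thm_additive_picard}.
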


\begin{corollary}\label{cor_additive_banach}
(Banach's fixed point theorem) Theorem~\ref{ThBanach} holds.
\end{corollary}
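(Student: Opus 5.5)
The plan is to recover Theorem~\ref{ThBanach} as a special case of Corollary~\ref{cor_additive_psi}, taking $\psi(t)=t$, or equivalently $\delta=d$ in Definition~\ref{d16}. Let $(X,d)$ be complete and let $T$ satisfy $d(Tx,Ty)\le \alpha d(x,y)$ for all $x,y\in X$ with $\alpha\in[0,1)$.

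The first step is to check that $\psi(t)=t$ is admissible. It maps $[0,\infty)$ to $[0,\infty)$, it is continuous at $0$, and $\psi(0)=0$. With this choice the left-hand side of \eqref{eq_additive_psi} is $2d(Tx,Ty)$ and the right-hand side is $2\alpha d(x,y)$. Hence \eqref{eq_additive_psi} is just twice the Banach inequality, and it holds for all $x,y\in X$ with the same constant $\alpha$.

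Corollary~\ref{cor_additive_psi} then yields that $T$ is a Picard operator, which is exactly the conclusion of Theorem~\ref{ThBanach}.

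As a cross-check by a second route, set $\delta=d$ directly. This $\delta$ is a semimetric, and it is strongly $d$-regular by Remark~\ref{rem34} (condition (ii) of Example~\ref{ex26} with $K=1$). By Lemma~\ref{lem_additive_continuous}, $T$ is therefore continuous, and Theorem~\ref{thm_additive_picard} applies.

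There is no real obstacle in this argument. The only point needing care is that $\psi$ meets the hypotheses of Corollary~\ref{cor_additive_psi}, which is immediate for the identity function.
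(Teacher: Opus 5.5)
Your proposal is correct and matches the paper's proof: the paper also sets $\psi(t)=t$ in Corollary~\ref{cor_additive_psi} and, as an alternative, sets $\delta=d$ in Theorem~\ref{thm_additive_picard}. The only small difference is that you get continuity of $T$ from strong $d$-regularity and Lemma~\ref{lem_additive_continuous}, whereas it also follows directly because a Banach contraction is Lipschitz.
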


\begin{proof}
It suffices to set $\psi(x)=x$ in Corollary~\ref{cor_additive_psi} and simplify the inequality. Alternatively, it suffices to set $\delta=d$ in Theorem~\ref{thm_additive_picard}.
\end{proof}

\begin{example}
Let $(X,d)$ be a metric space such that  $X = \{a, b, c\}$ and
\begin{equation*}
    d(a, b) = 1, \quad d(b, c) = 2, \quad d(a, c) = 3,
\end{equation*}
Define the mapping $T : X \to X$ as
\begin{equation*}
    T(a) = b, \quad T(b) = c, \quad T(c) = c.
\end{equation*}

First, we observe that $T$ does not satisfy the Banach Contraction Principle. For the pair $(a, b)$, the distance strictly increases:
$ d(Ta, Tb) = d(b, c) = 2 > 1 = d(a, b)$.

Now, we show that $T$ nonetheless satisfies the conditions of Corollary~\ref{cor_additive_psi}. Let $\alpha = 0.8 \in [0, 1)$, and define the function $\psi : [0, \infty) \to [0, \infty)$ such that
\begin{equation*}
    \psi(t) = 
    \begin{cases} 
      100 & \text{if } t = 1, \\
      0 & \text{otherwise.}
   \end{cases}
\end{equation*}
By definition, $\psi(0) = 0$ and $\psi$ is continuous at $t = 0$. We must verify that the inequality~(\ref{eq_additive_psi})
 for all $x, y \in X$. Since the inequality is trivially satisfied when $x = y$, we only need to check the distinct pairs:

For $(x, y) = (a, b)$:
    \begin{multline*}
    2 = 2 + 0 = d(Ta, Tb) + \psi(d(Ta, Tb)) \\
        \leqslant \alpha \big( d(a, b) + \psi(d(a, b)) \big) = 0.8(1 + 100) = 80.8.
    \end{multline*}

 For $(x, y) = (a, c)$:
    \begin{multline*}
    2 = 2 + 0 = d(b, c) + \psi(2) =
        d(Ta, Tc) + \psi(d(Ta, Tc)) \\
        \leqslant \alpha \big( d(a, c) + \psi(d(a, c)) \big) = 0.8(3 + 0) = 2.4.
    \end{multline*}

 For $(x, y) = (b, c)$:
    \begin{multline*}
    0 = d(c, c) + \psi(0) = 
        d(Tb, Tc) + \psi(d(Tb, Tc)) \\
        \leqslant \alpha \big( d(b, c) + \psi(d(b, c)) \big) = 0.8(2 + 0) = 1.6.
    \end{multline*}
Therefore, $T$ is a Picard operator satisfying Corollary~\ref{cor_additive_psi}, despite failing the Banach Contraction Principle on the metric space $(X, d)$.
\end{example}

\begin{remark}
Corollary~\ref{cor_additive_psi} describes a sufficiently large class of contractive mappings. One can observe that by defining a new distance $\rho(x,y) = d(x,y) + \psi(d(x,y))$ we can conclude that $T$ is a Banach contraction on a new space $(X,\rho)$. It is so, but the point is that in general case $(X,\rho)$ is a semimetric space but not metric. Recall that a function $f\colon [0, \infty) \to [0, \infty)$ is  \emph{metric-preserving} if for every metric space $(X, d)$, the composition $d_f = f \circ d$ is also a metric on $X$. Since the function $f(t) = t + \psi(t)$ is not obligatory subadditive ($f(a+b) > f(a) + f(b)$), which is a necessary condition for preservation of the metric, see~\cite{Cor99}, there is no guarantee that $(X,\rho)$ is a metric space. Moreover, note that there exist no fixed point theorems establishing Banach contraction principle in an arbitrary semimetric space, such theorem can exist only under some additional conditions, see e.g.~\cite{PSB24}.
\end{remark}

\textbf{Connections with perturbed metric spaces.} In~\cite{JS25} M. Jleli and B. Samet introduced  the notion of a perturbed metric space.

\begin{definition}\label{d310}
Let $D, P : X \times X \to [0, \infty)$ be two given mappings. We say that $D$ is a perturbed metric on $X$ with respect to $P$, if
\begin{align*}
    d = D - P\colon X \times X &\to \mathbb{R}, \\
    (x, y) &\mapsto D(x, y) - P(x, y)
\end{align*}
is a metric on $X$. We call $P$ a perturbed mapping, $d = D - P$ an exact metric, and $(X, D, P)$ a perturbed metric space.
\end{definition}
The mapping $T$ is a \textit{perturbed continuous mapping}, if $T$ is continuous with respect to the exact metric $d$. We say that $(X,D,P)$ is a complete perturbed metric space, if $(X, d)$ is a complete metric space.
The following theorem was proved in~\cite[Theorem~3.1]{JS25}.
\begin{theorem}\label{t311}
Let $(X, D, P)$ be a complete perturbed metric space and $T : X \to X$ be a given mapping. Assume that the following conditions hold:
\begin{itemize}
    \item[(i)] $T$ is a perturbed continuous mapping;
    \item[(ii)] There exists $\lambda \in (0, 1)$ such that
    \begin{equation*}
        D(Tu, Tv) \leq \lambda D(u, v) 
    \end{equation*}
    for all $u, v \in X$.
\end{itemize}
Then, $T$ admits one and only one fixed point.
\end{theorem}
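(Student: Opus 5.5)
The plan is to reduce Theorem~\ref{t311} to Theorem~\ref{thm_additive_picard}, with the exact metric $d = D - P$ as the ambient metric and the perturbation $P$ as the auxiliary semimetric $\delta$. Since $d$ is a metric, $(X,d)$ is complete by the definition of a complete perturbed metric space. Since $d = D - P$, we have $D = d + P$, so condition (ii) reads
$$
d(Tu,Tv) + P(Tu,Tv) \le \lambda\big(d(u,v) + P(u,v)\big)
$$
for all $u,v\in X$. This is exactly inequality~(\ref{additive-contr}) with $\delta = P$ and $\alpha = \lambda$. Condition (i) says that $T$ is continuous in $(X,d)$. So once $P$ is known to be a semimetric, Theorem~\ref{thm_additive_picard} applies and gives that $T$ is a Picard operator, in particular that it has exactly one fixed point.

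The main obstacle is that Definition~\ref{d310} only gives $P\colon X\times X\to[0,\infty)$; it need not be symmetric or vanish on the diagonal. I would handle this in two ways.

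\emph{Diagonal.} Put $u=v$ in (ii): $D(Tu,Tu)\le\lambda D(u,u)$, so $P(Tu,Tu)\le \lambda P(u,u)$. This alone does not force $P(u,u)=0$. However, the proof of Theorem~\ref{am-gm-contr th}, which Theorem~\ref{thm_additive_picard} copies, never uses $\delta(x,x)=0$ or the symmetry of $\delta$. It only uses:
\begin{itemize}
\item nonnegativity of $\delta$, to get $d\le d+\delta$;
\item the contraction inequality along consecutive iterates, $d(x_n,x_{n+1})+\delta(x_n,x_{n+1})\le \alpha^n\big(d(x_0,x_1)+\delta(x_0,x_1)\big)$.
\end{itemize}
It is therefore safest not to cite Theorem~\ref{thm_additive_picard} as a black box. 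Instead I would rerun that argument verbatim with $\delta=P$, checking that each step needs only $P\ge 0$.

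\emph{The argument with $\delta = P$.} Fix $x_0$ and set $x_n=T^nx_0$. Iterating (ii) gives $D(x_n,x_{n+1})\le\lambda^n D(x_0,x_1)$. Then
$$
d(x_n,x_m)\le\sum_{k=n}^{m-1} D(x_k,x_{k+1})\le \frac{\lambda^n}{1-\lambda}\,D(x_0,x_1),
$$
so $(x_n)$ is $d$-Cauchy and converges to some $x^*$ by completeness. Continuity of $T$ in $d$ gives $Tx^*=x^*$.

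\emph{Uniqueness.} If $u\ne v$ are both fixed points, then (ii) gives $D(u,v)\le\lambda D(u,v)$, so $D(u,v)=0$. Hence $d(u,v)=D(u,v)-P(u,v)\le 0$, which forces $u=v$, a contradiction.

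Symmetry of $P$ is never used, so no assumption on $P$ beyond nonnegativity is needed. Finally, I would remark that the converse direction also holds: an additive-type contraction with semimetric $\delta$ yields a perturbed metric $D=d+\delta$ satisfying (ii). This is the equivalence ``with caveats'' promised in the abstract, the caveat being that $P$ is more general than a semimetric.
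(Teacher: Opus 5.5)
Your proof is correct and covers the full statement. The paper gives no proof of Theorem~\ref{t311}: it is quoted from Jleli--Samet, and the paper only remarks that when $P$ is a semimetric, setting $D=d+\delta$ identifies it with Theorem~\ref{thm_additive_picard}.

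Your first paragraph is that same identification. You correctly note that Definition~\ref{d310} gives only $P\ge 0$, so citing Theorem~\ref{thm_additive_picard} as a black box does not literally apply. Rerunning the iteration directly closes this gap, because the argument uses only two facts:
\begin{itemize}
\item $d\le D$, which gives the estimate $d(x_n,x_m)\le \lambda^n D(x_0,x_1)/(1-\lambda)$;
\item $D(u,v)=0\Rightarrow d(u,v)\le 0$, which gives uniqueness.
\end{itemize}
So you prove more than the paper's remark, which handles only semimetric perturbations.

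If you would rather cite Theorem~\ref{thm_additive_picard} as a black box, the restriction can be removed by a small modification. Take the metric $d'=\tfrac12 d$, and set $\delta'(x,x)=0$ and $\delta'(x,y)=\tfrac12 d(x,y)+\max\{P(x,y),P(y,x)\}$ for $x\ne y$. Then:
\begin{itemize}
\item $\delta'$ is a genuine semimetric;
\item $(X,d')$ is complete and $T$ is continuous on it;
\item for $Tx\ne Ty$, condition (ii) gives $d'(Tx,Ty)+\delta'(Tx,Ty)=\max\{D(Tx,Ty),D(Ty,Tx)\}\le\lambda\max\{D(x,y),D(y,x)\}=\lambda\bigl(d'(x,y)+\delta'(x,y)\bigr)$, and the case $Tx=Ty$ is trivial.
\end{itemize}

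Your direct route is shorter and also gives the convergence-rate estimate. The modified route shows that existence and uniqueness in Theorem~\ref{t311} follow from Theorem~\ref{thm_additive_picard} with no restriction on $P$.
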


Note that Theorem~\ref{t311} is an equivalent of Theorem~\ref{thm_additive_picard} in the case $D$ and $P$ from Definition~\ref{d310} are restricted to be semimetrics. Indeed, if we set $D=d+\delta$ in Definition~\ref{d16}, then $(X,D,\delta)$ is a perturbed metric space, since  $d$ from Definition~\ref{d16} ($d=D-\delta$) is an exact metric.
According to conditions of Theorem~\ref{t311} $T$ is continuous on $(X,d)$ (which satisfies condition (i) of Theorem~\ref{t311}). According to Definition~\ref{d16} $T$ is a Banach  contraction on $(X,D)$ (which satisfies condition (ii) of Theorem~\ref{t311}).

\begin{problem}\label{op31}
Consider analogues of Open problems~\ref{op213},~\ref{op214} and~\ref{op215} for additive type contractions.
\end{problem}

\section{Multiplicative-type contractions}

In this section, we study the properties of multiplicative type contractions and establish a fixed-point theorem for these mappings.

\begin{proposition}\label{prop_multi_banach}
Let $(X, d)$ be a metric space. Then, every multiplicative-type contraction with the coefficient $\alpha \in [0,1)$ is a Banach contraction if there exist constants $c_2 \ge c_1 > 0$ such that for all $x, y \in X$, $x\neq y$, the inequality $c_1 \le \delta(x,y) \le c_2$ holds, and $\alpha \frac{c_2}{c_1} < 1$.
\end{proposition}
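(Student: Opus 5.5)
We argue directly from the multiplicative-type inequality~(\ref{multi-contr}), dividing out the $\delta$-factors with the two-sided bounds $c_1\le\delta\le c_2$. Let $T\colon X\to X$ be a multiplicative-type contraction with coefficient $\alpha$ and semimetric $\delta$, and fix $x,y\in X$. We show $d(Tx,Ty)\le \alpha\frac{c_2}{c_1}\, d(x,y)$. Since $\alpha\frac{c_2}{c_1}<1$ by hypothesis, this is the Banach condition of Theorem~\ref{ThBanach}.

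We split into cases according to whether the points involved coincide. If $x=y$, then $Tx=Ty$ and both sides vanish. If $x\ne y$ but $Tx=Ty$, the left side is $0$ and there is nothing to prove.

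The main case is $x\neq y$ and $Tx\neq Ty$. Here the lower bound $\delta(Tx,Ty)\ge c_1>0$ and the upper bound $\delta(x,y)\le c_2$ both apply. Inequality~(\ref{multi-contr}) gives
$$
c_1\, d(Tx,Ty)\le d(Tx,Ty)\,\delta(Tx,Ty)\le \alpha\, d(x,y)\,\delta(x,y)\le \alpha c_2\, d(x,y).
$$
Dividing by $c_1>0$ yields the desired estimate.

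The only delicate point is that the bounds on $\delta$ are assumed only for distinct pairs. The lower bound must therefore be applied to the pair $(Tx,Ty)$ only when $Tx\ne Ty$, which is exactly why the trivial cases are separated first. No further obstacle is expected. The case $\alpha=0$ is included: the estimate then forces $T$ to be constant, which is still a Banach contraction.
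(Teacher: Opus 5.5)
Your proof is correct and matches the paper's argument: the same chain $c_1\,d(Tx,Ty)\le d(Tx,Ty)\,\delta(Tx,Ty)\le \alpha\,d(x,y)\,\delta(x,y)\le \alpha c_2\,d(x,y)$, followed by division by $c_1$. The paper does not separate the degenerate cases, because the chain holds there anyway (the factors $d(Tx,Ty)$ or $d(x,y)$ vanish), so your explicit case split is just a slightly more careful presentation of the same proof.
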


\begin{proof}
Let $T\colon X \to X$ be a multiplicative-type contraction with the contraction coefficient $\alpha$. By the definition of the contraction and the bounds on $\delta$, we have:
\begin{equation*}
    c_1 d(Tx,Ty) \le d(Tx,Ty)\delta(Tx,Ty) \le \alpha d(x,y)\delta(x,y) \le \alpha c_2 d(x,y).
\end{equation*}
Dividing both sides by $c_1$, we obtain:
\begin{equation*}
    d(Tx,Ty) \le \left(\alpha \frac{c_2}{c_1}\right) d(x,y),
\end{equation*}
which completes the proof, since $0 \le \alpha \frac{c_2}{c_1} < 1$.
\end{proof}

\begin{remark}
Unlike additive-type or A.M.-G.M. contractions where bounding $\delta$ strictly from above is sufficient, isolating the metric $d$ in a multiplicative formulation requires $\delta$ to be bounded away from zero to prevent the product from collapsing trivially. 
\end{remark}

\begin{definition}\label{def_d_lower_bounded}
Let $(X, d)$ be a metric space and let $\delta\colon X \times X \to \mathbb{R}^+$ be a semimetric on $X$.  We say that the semimetric $\delta$ is \textit{d-lower bounded} if there exist constants $c > 0$ and $\gamma > 0$ such that $\delta(x,y) \ge c \, d(x,y)^\gamma$ for all $x, y \in X$.
\end{definition}

\begin{lemma}\label{lem_multi_continuous}
Let $(X,d)$ be a metric space and $(X, \delta)$ be a semimetric space.  If the semimetric $\delta$ is both $d$-regular and $d$-lower bounded, then any multiplicative-type contraction mapping $T\colon X \to X$ is continuous.
\end{lemma}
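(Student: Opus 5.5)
We argue sequentially, in the same spirit as Lemma~\ref{lem2.1} and Lemma~\ref{lem_additive_continuous}. Fix $x^*\in X$ and a sequence $(x_n)$ in $X$ with $d(x_n,x^*)\to 0$; we must show $d(Tx_n,Tx^*)\to 0$.

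We apply the multiplicative-type condition~(\ref{multi-contr}) with $x=x_n$, $y=x^*$:
\[
d(Tx_n,Tx^*)\,\delta(Tx_n,Tx^*)\le \alpha\, d(x_n,x^*)\,\delta(x_n,x^*).
\]
By $d$-regularity of $\delta$, the right-hand side tends to $0$. Hence the product $d(Tx_n,Tx^*)\,\delta(Tx_n,Tx^*)$ tends to $0$.

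The only real issue is to separate the factor $d$ from the product. This is exactly where $d$-lower boundedness enters, in the spirit of the remark after Proposition~\ref{prop_multi_banach}: $\delta$ must not collapse the product. By Definition~\ref{def_d_lower_bounded} there are $c>0$ and $\gamma>0$ with $\delta(Tx_n,Tx^*)\ge c\,d(Tx_n,Tx^*)^{\gamma}$. Therefore
\[
c\,d(Tx_n,Tx^*)^{1+\gamma}\le d(Tx_n,Tx^*)\,\delta(Tx_n,Tx^*)\le \alpha\, d(x_n,x^*)\,\delta(x_n,x^*),
\]
and so
\[
d(Tx_n,Tx^*)\le \Bigl(\tfrac{\alpha}{c}\,d(x_n,x^*)\,\delta(x_n,x^*)\Bigr)^{\frac{1}{1+\gamma}}\to 0,
\]
because $t\mapsto t^{1/(1+\gamma)}$ is continuous at $0$. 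Thus $Tx_n\to Tx^*$ in $(X,d)$.

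Since $X$ is a metric space, sequential continuity is equivalent to continuity, which finishes the argument. The case $\alpha=0$ needs no separate treatment: then $d(Tx_n,Tx^*)\delta(Tx_n,Tx^*)=0$, and the lower bound forces $d(Tx_n,Tx^*)=0$. I do not expect a genuine obstacle. The one point to get right is that the lower bound must be applied to the image pair $(Tx_n,Tx^*)$, not to $(x_n,x^*)$, since $d$-regularity already controls the right-hand side.
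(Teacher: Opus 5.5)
Your proposal is correct and follows the paper's proof essentially verbatim. You apply (\ref{multi-contr}) to the pair $(x_n,x^*)$ and use $d$-lower boundedness on the image pair to get $c\,d(Tx_n,Tx^*)^{1+\gamma}$ on the left; $d$-regularity then sends the right-hand side to $0$, and your explicit root extraction and remark on $\alpha=0$ are harmless additions.
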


\begin{proof}
Let $(x_n)$ be a sequence in $X$ such that $d(x_n,x^*) \to 0$. Using the multiplicative-type contraction condition~(\ref{multi-contr}) and the $d$-lower bound property, we have:
\begin{equation*}
    c \, d(Tx_n,Tx^*)^{1+\gamma} \le d(Tx_n,Tx^*)\delta(Tx_n,Tx^*) \le \alpha d(x_n,x^*)\delta(x_n,x^*).
\end{equation*}
Letting $n \to \infty$, by the $d$-regularity of the semimetric $\delta$, the right-hand side tends to $0$. Consequently, $c \, d(Tx_n,Tx^*)^{1+\gamma} \to 0$. Since $c > 0$ and $\gamma > 0$, this implies $d(Tx_n,Tx^*) \to 0$, completing the proof.
\end{proof}

\begin{theorem}\label{thm_multi_picard}
Let $(X, d)$ be a complete metric space and $T: X \to X$ be a multiplicative-type contraction. If $\delta$ is both $d$-regular and $d$-lower bounded, then $T$ is a Picard operator.
\end{theorem}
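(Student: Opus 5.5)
The plan is to follow the scheme of the proof of Theorem~\ref{am-gm-contr th}, with the product $d\delta$ playing the role that $d+\delta$ played there. The $d$-lower boundedness of $\delta$ converts control of the product back into control of $d$. Continuity of $T$ will come for free from Lemma~\ref{lem_multi_continuous}, since $\delta$ is assumed to be both $d$-regular and $d$-lower bounded.

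Fix $x_0\in X$, put $x_n=T^nx_0$, and set $a_n=d(x_n,x_{n+1})\delta(x_n,x_{n+1})$. Applying (\ref{multi-contr}) with $x=x_{n-1}$, $y=x_n$ gives $a_n\le \alpha a_{n-1}$, and by induction $a_n\le \alpha^n a_0$. By Definition~\ref{def_d_lower_bounded},
$$
c\, d(x_n,x_{n+1})^{1+\gamma}\le a_n\le \alpha^n a_0 ,
$$
hence $d(x_n,x_{n+1})\le (a_0/c)^{1/(1+\gamma)}\, q^n$ with $q=\alpha^{1/(1+\gamma)}\in[0,1)$. The triangle inequality then yields, for $m>n$,
$$
d(x_n,x_m)\le (a_0/c)^{1/(1+\gamma)}\frac{q^n}{1-q}.
$$
So $(x_n)$ is Cauchy, and by completeness $x_n\to x^*$ for some $x^*\in X$.

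By Lemma~\ref{lem_multi_continuous}, $T$ is continuous, so $Tx^*=\lim Tx_n=\lim x_{n+1}=x^*$. For uniqueness, suppose $u\neq v$ are fixed points. Then (\ref{multi-contr}) gives $d(u,v)\delta(u,v)\le\alpha\, d(u,v)\delta(u,v)$, which forces $d(u,v)\delta(u,v)=0$. This is impossible, since $d(u,v)>0$ and $\delta(u,v)>0$ by the semimetric axiom (i). Every orbit converges to a fixed point and that point is unique, so $T$ is a Picard operator.

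The main obstacle is the Cauchy step. Unlike the additive case, geometric decay of the product $d\delta$ alone says nothing about $d$, because $\delta$ could blow up along the orbit. This is exactly where the bound $\delta\ge c\,d^\gamma$ is needed: it turns the decay of $a_n$ into geometric decay of $d(x_n,x_{n+1})$ with the modified ratio $\alpha^{1/(1+\gamma)}$. The case $\alpha=0$ is harmless, since then the orbit becomes stationary after the first step.
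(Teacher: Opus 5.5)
Your proposal is correct and matches the paper's proof step for step. It uses geometric decay of $d(x_n,x_{n+1})\delta(x_n,x_{n+1})$, converts this via the $d$-lower bound into decay of $d(x_n,x_{n+1})$ with ratio $\alpha^{1/(1+\gamma)}$, and concludes with the Cauchy/completeness argument, continuity from Lemma~\ref{lem_multi_continuous}, and uniqueness from $d(u,v)\delta(u,v)>0$.
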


\begin{proof}
Suppose that $x_0 \in X$. We define a sequence $(x_n)$ for all $n>0$ by $x_n = T^n x_0$. 
Taking $x=x_0$ and $y=x_1$ in inequality~(\ref{multi-contr}), we have
\begin{equation*}
    d(x_1,x_2)\delta(x_1,x_2) \le \alpha d(x_0,x_1)\delta(x_0,x_1).
\end{equation*}
Taking $x=x_1$ and $y=x_2$ in the same inequality, we have
\begin{equation*}
    d(x_2,x_3)\delta(x_2,x_3) \le \alpha d(x_1,x_2)\delta(x_1,x_2) \le \alpha^2 d(x_0,x_1)\delta(x_0,x_1).
\end{equation*}
Continuing in this way, we deduce by induction that
\begin{equation*}
    d(x_n,x_{n+1})\delta(x_n,x_{n+1}) \le \alpha^n d(x_0,x_1)\delta(x_0,x_1).
\end{equation*}
Because $\delta$ is $d$-lower bounded, $\delta(x_n,x_{n+1}) \ge c \, d(x_n,x_{n+1})^\gamma$. Substituting this into the left-hand side yields
\begin{equation*}
    c \, d(x_n,x_{n+1})^{1+\gamma} \le \alpha^n d(x_0,x_1)\delta(x_0,x_1).
\end{equation*}
Solving for $d(x_n,x_{n+1})$, we obtain
\begin{equation}\label{eq_multi_distance_bound}
    d(x_n,x_{n+1}) \le \left( \frac{d(x_0,x_1)\delta(x_0,x_1)}{c} \right)^{\frac{1}{1+\gamma}} \left( \alpha^{\frac{1}{1+\gamma}} \right)^n.
\end{equation}
Let $M = \left( \frac{d(x_0,x_1)\delta(x_0,x_1)}{c} \right)^{\frac{1}{1+\gamma}}$ and $\lambda = \alpha^{\frac{1}{1+\gamma}}$. Since $\alpha \in [0, 1)$ and $\gamma > 0$, it follows strictly that $\lambda \in [0, 1)$. Thus, equation \eqref{eq_multi_distance_bound} simplifies to:
\begin{equation*}
    d(x_n,x_{n+1}) \le M \lambda^n.
\end{equation*}
Using the triangle inequality, for any $m > n$, we have:
\begin{align*}
d(x_n,x_m) &\le d(x_n,x_{n+1}) + d(x_{n+1},x_{n+2}) + \dots + d(x_{m-1},x_m)\\
&\le M \lambda^n + M \lambda^{n+1} + \dots + M \lambda^{m-1} \\
&\le M \lambda^n \frac{1 - \lambda^{m-n}}{1 - \lambda} \le \frac{M \lambda^n}{1 - \lambda}.
\end{align*}
Since $\lambda < 1$, as $n \to \infty$, $\lambda^n \to 0$. This proves that $(x_n)$ is a Cauchy sequence. Since $(X, d)$ is complete, there exists $x^* \in X$ such that $x_n \to x^*$.

By Lemma \ref{lem_multi_continuous}, the mapping $T$ is continuous, implying $Tx^* = x^*$. 

To prove uniqueness, suppose $u$ and $v$ are distinct fixed points ($Tu = u \ne v = Tv$). Then by~(\ref{multi-contr}) we have
\begin{equation*}
    d(u,v)\delta(u,v) \le \alpha d(u,v)\delta(u,v).
\end{equation*}
Since $u \ne v$, $d(u,v) > 0$ and $\delta(u,v) > 0$. Thus, $d(u,v)\delta(u,v) > 0$. Dividing both sides by this strictly positive value yields $1 \le \alpha$, which contradicts $\alpha \in [0, 1)$. Hence, $u=v$, and the fixed point is unique.
\end{proof}

\begin{remark}
Letting $m \to \infty$ in the sequence estimation, the distance from the fixed point $x^*$ to the $n$-th iterate of $T$ is bounded by:
\begin{equation*}
    d(x_n,x^*) \le \frac{M \lambda^n}{1-\lambda},
\end{equation*}
where $M = \left( c^{-1} d(x_0,x_1)\delta(x_0,x_1) \right)^{\frac{1}{1+\gamma}}$ and $\lambda = \alpha^{\frac{1}{1+\gamma}}$.
\end{remark}

\begin{corollary}\label{cor_multi_psi}
Let $(X, d)$ be a complete metric space and $T: X \to X$ be a mapping satisfying
\begin{equation}\label{eq_multi_psi}
    d(Tx,Ty)\psi(d(Tx,Ty)) \le \alpha d(x,y)\psi(d(x,y))
\end{equation}
for all $x, y \in X$, where $\alpha \in [0, 1)$ and $\psi\colon [0,\infty) \to [0,\infty)$ is a continuous function at $0$ such that $\psi(0)=0$ and $\psi(t) \ge c \, t^\gamma$ for some $c>0, \gamma > 0$. Then $T$ is a Picard operator.
\end{corollary}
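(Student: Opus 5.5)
The plan is to reduce Corollary~\ref{cor_multi_psi} to Theorem~\ref{thm_multi_picard} with the choice $\delta(x,y)=\psi(d(x,y))$. With this $\delta$, inequality~\eqref{eq_multi_psi} is exactly the multiplicative-type condition~(\ref{multi-contr}). It then remains to check three things: that $\delta$ is a semimetric, that it is $d$-regular, and that it is $d$-lower bounded.

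\emph{Semimetric axioms.} Symmetry is inherited from the symmetry of $d$, and $\delta(x,x)=\psi(0)=0$. For $x\neq y$ we have $d(x,y)=t>0$, so $\psi(t)\ge c\,t^\gamma>0$. Hence $\delta(x,y)=0$ forces $x=y$, which gives axiom (i). This is the step where the lower bound $\psi(t)\ge ct^\gamma$ is genuinely needed, beyond its later use.

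\emph{$d$-regularity.} This is condition (iii) of Example~\ref{ex26}. If $d(x_n,x^*)\to0$, continuity of $\psi$ at $0$ together with $\psi(0)=0$ gives $\delta(x_n,x^*)=\psi(d(x_n,x^*))\to0$. Therefore $d(x_n,x^*)\delta(x_n,x^*)\to0$.

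\emph{$d$-lower boundedness.} This is immediate from Definition~\ref{def_d_lower_bounded}, since $\delta(x,y)=\psi(d(x,y))\ge c\,d(x,y)^\gamma$ for all $x,y$, including $x=y$, where both sides vanish.

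With these three facts in place, Theorem~\ref{thm_multi_picard} (through Lemma~\ref{lem_multi_continuous}, which supplies continuity) shows that $T$ is a Picard operator. No step is a real obstacle. The only point needing care is verifying axiom (i) of a semimetric, and the positivity hypothesis $\psi(t)\ge ct^\gamma$ on $(0,\infty)$ handles it.
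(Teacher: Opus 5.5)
Your proposal is correct and follows the route the paper intends. The paper gives no separate proof and treats the corollary as the specialization of Theorem~\ref{thm_multi_picard} to $\delta(x,y)=\psi(d(x,y))$, with $d$-regularity from condition (iii) of Example~\ref{ex26} and $d$-lower boundedness from $\psi(t)\ge c\,t^\gamma$. Your explicit check that $\psi\circ d$ is a genuine semimetric, using the lower bound for positivity off the diagonal (which the uniqueness step of Theorem~\ref{thm_multi_picard} also needs), fills in a detail the paper leaves implicit.
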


\begin{example}
Let $\psi(x) = k x^\gamma$ with $k>0, \gamma>0$. Substituting this into \eqref{eq_multi_psi} yields
\begin{align*}
    d(Tx,Ty) \cdot k d(Tx,Ty)^\gamma &\le \alpha d(x,y) \cdot k d(x,y)^\gamma \\
    k d(Tx,Ty)^{1+\gamma} &\le \alpha k d(x,y)^{1+\gamma} \\
    d(Tx,Ty) &\le \alpha^{\frac{1}{1+\gamma}} d(x,y).
\end{align*}
Thus, multiplicative-type contraction in this case algebraically collapses the inequality into a standard Banach contraction with the coefficient $\lambda = \alpha^{\frac{1}{1+\gamma}}$. 
\end{example}

\begin{corollary}\label{cor_multi_exponential}
Let $(X, d)$ be a complete metric space and $T: X \to X$ be a mapping. If there exist constants $\alpha \in [0, 1)$ and $k > 0$ such that for all $x, y \in X$,
\begin{equation}
    d(Tx,Ty) \left( e^{k d(Tx,Ty)} - 1 \right) \le \alpha d(x,y) \left( e^{k d(x,y)} - 1 \right),
\end{equation}
then $T$ has a unique fixed point in $X$.
\end{corollary}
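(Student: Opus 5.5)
The plan is to derive Corollary~\ref{cor_multi_exponential} from Corollary~\ref{cor_multi_psi} by taking $\psi(t)=e^{kt}-1$. With this choice the hypothesis of the present corollary is exactly inequality~\eqref{eq_multi_psi}, so it only remains to check the conditions on $\psi$ required there.

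First, $\psi\colon[0,\infty)\to[0,\infty)$ is well defined and nonnegative, since $e^{kt}\ge 1$ for $t\ge 0$. It satisfies $\psi(0)=0$ and is continuous everywhere, in particular at $0$. The semimetric axioms for $\delta(x,y)=\psi(d(x,y))$ hold: symmetry is inherited from $d$, and $\delta(x,y)=0$ if and only if $d(x,y)=0$, because $\psi$ is strictly increasing with $\psi(0)=0$.

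Second, I need the lower bound $\psi(t)\ge c\,t^\gamma$. I take $\gamma=1$ and $c=k$. The convexity inequality $e^{s}\ge 1+s$, applied with $s=kt$, gives $e^{kt}-1\ge kt$ for all $t\ge 0$. Hence $\delta$ is $d$-lower bounded in the sense of Definition~\ref{def_d_lower_bounded}. By Example~\ref{ex26}(iii), $\delta$ is also $d$-regular.

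All hypotheses of Corollary~\ref{cor_multi_psi} (equivalently, of Theorem~\ref{thm_multi_picard}) are then satisfied, so $T$ is a Picard operator and in particular has a unique fixed point. There is no real obstacle here; the only step needing an idea is the choice of the constants $c,\gamma$ in the lower bound, which the exponential inequality settles immediately.

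As a side remark, $t\mapsto t(e^{kt}-1)$ is strictly increasing, so the hypothesis even forces $d(Tx,Ty)\le d(x,y)$. I would not pursue this, since the corollary route already suffices.
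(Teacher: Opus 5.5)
Your proposal is correct and matches the paper's proof. Both take $\psi(t)=e^{kt}-1$, check that $\psi(0)=0$ and that $\psi$ is continuous at $0$, obtain $e^{kt}-1\ge kt$ (so $c=k$, $\gamma=1$), and apply Corollary~\ref{cor_multi_psi}. Your additional checks (the semimetric axioms for $\psi\circ d$ and $d$-regularity via Example~\ref{ex26}(iii)) only make explicit what the paper leaves implicit.
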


\begin{proof}
Consider the function $\psi(t) = e^{kt} - 1$. It is continuous at $0$ and $\psi(0) = e^0 - 1 = 0$. By the Taylor series expansion of the exponential function, $e^{kt} - 1 \ge kt$ for all $t \ge 0$. 
Thus, $\psi(t)$ satisfies the $d$-lower bound condition with $c = k$ and $\gamma = 1$. The result follows immediately from Corollary \ref{cor_multi_psi}.
\end{proof}

\begin{corollary}\label{cor_multi_logarithmic}
Let $(X, d)$ be a complete bounded metric space with diameter $M < \infty$, and let $T: X \to X$ be a self-mapping. If there exists $\alpha \in [0, 1)$ such that for all $x, y \in X$,
\begin{equation}
    d(Tx,Ty) \ln(1 + d(Tx,Ty)) \le \alpha d(x,y) \ln(1 + d(x,y)),
\end{equation}
then $T$ is a Picard operator.
\end{corollary}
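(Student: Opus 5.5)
We will deduce Corollary~\ref{cor_multi_logarithmic} from Corollary~\ref{cor_multi_psi}, taking $\psi(t)=\ln(1+t)$. Clearly $\psi$ is continuous at $0$ and $\psi(0)=0$. So the task is to find a lower bound $\psi(t)\ge c\,t^\gamma$ with $c>0$, $\gamma>0$. Such a bound must hold for the values $t=d(x,y)$ that actually occur, i.e.\ for $t\in[0,M]$.

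\textbf{Main obstacle.} The only point needing care is that no global bound of the form $\ln(1+t)\ge c\,t^\gamma$ with $\gamma=1$ exists on all of $[0,\infty)$, since $\ln(1+t)/t\to 0$. This is exactly why boundedness of $X$ is assumed.

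\textbf{Lower bound on $[0,M]$.} The function $g(t)=\ln(1+t)/t$ for $t>0$, extended by $g(0)=1$, is continuous and decreasing on $[0,\infty)$. Decreasing follows since $\ln(1+t)$ is concave with value $0$ at $0$; equivalently, $g'(t)\le 0$ reduces to $\frac{t}{1+t}\le \ln(1+t)$. Hence for $0\le t\le M$ we get
\[
\ln(1+t)\ge \frac{\ln(1+M)}{M}\,t .
\]
We therefore take $c=\ln(1+M)/M>0$ and $\gamma=1$. The degenerate case $M=0$ means $X$ is a single point, where the claim is trivial.

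\textbf{Applying Corollary~\ref{cor_multi_psi}.} That corollary, or more precisely its proof via Theorem~\ref{thm_multi_picard} and Lemma~\ref{lem_multi_continuous}, only uses the lower bound at pairs of points of $X$. So we verify the hypotheses at the level of the semimetric $\delta(x,y)=\ln(1+d(x,y))$.

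\emph{Semimetric.} $\delta$ is symmetric, and $\delta(x,y)=0$ if and only if $x=y$, since $\ln(1+s)>0$ for $s>0$.

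\emph{$d$-lower bounded.} For all $x,y\in X$ we have $d(x,y)\le M$, so $\delta(x,y)\ge c\,d(x,y)$.

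\emph{$d$-regular.} By Example~\ref{ex26}(ii) or (iii): $\delta\le d$, or $\delta$ is bounded by $\ln(1+M)$.

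The hypothesis of the corollary is then exactly the multiplicative-type contraction~(\ref{multi-contr}) with this $\delta$. Theorem~\ref{thm_multi_picard} therefore yields that $T$ is a Picard operator.
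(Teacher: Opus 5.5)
Your proposal is correct and follows the paper's argument. You take $\psi(t)=\ln(1+t)$, use that $\ln(1+t)/t$ is decreasing to get $\ln(1+t)\ge \frac{\ln(1+M)}{M}\,t$ on $[0,M]$, and invoke the multiplicative-type fixed point result with $c=\ln(1+M)/M$ and $\gamma=1$. Your version is also slightly more careful than the paper's in three ways: you justify the monotonicity, you handle the degenerate case $M=0$, and you apply Theorem~\ref{thm_multi_picard} directly to $\delta=\ln(1+d)$. The last point matters because the lower bound holds only on $[0,M]$, not globally as Corollary~\ref{cor_multi_psi} literally requires.
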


\begin{proof}
Let $\psi(t) = \ln(1+t)$. This function is continuous at $0$, and $\psi(0) = 0$. Since $X$ is bounded by $M$, the maximum possible distance is $M$.  On the compact interval $[0, M]$, the function $f(t) = \frac{\ln(1+t)}{t}$ achieves its minimum value at $t=M$, which is strictly greater than zero. Let $c = \frac{\ln(1+M)}{M}$. Therefore, for all $t \in [0, M]$, we have $\ln(1+t) \ge c \cdot t$. This means $\psi(t)$ satisfies the $d$-lower bound condition with coefficient $c$ and $\gamma = 1$. Applying Corollary \ref{cor_multi_psi} yields the unique fixed point.
\end{proof}

\begin{problem}\label{op41}
Consider analogues of Open problems~\ref{op213},~\ref{op214} and~\ref{op215} for multiplicative type contractions.
\end{problem}

\section{Connections between the classes of mappings}
The following statement shows that, under the condition of comparability of the semimetric $\delta$ with the metric $d$ and a sufficiently small coefficient $\beta$ of the additive-type contraction, this mapping automatically becomes an  A.M.-G.M  contraction.

\begin{theorem} Let $(X,d)$ be a metric space and $\delta: X \times X \to [0,\infty)$ a semimetric. Suppose there exists a constant $K \ge 1$ such that for all $x,y \in X$ with $d(x,y) > 0$,
\begin{equation}\label{K-reg}
\frac{1}{K} \le \frac{\delta(x,y)}{d(x,y)} \le K.
\end{equation}
Assume that $T: X \to X$ is an additive-type contraction with coefficient $\beta \in [0,1)$:
\begin{equation}\label{ADC}
d(Tx,Ty) + \delta(Tx,Ty) \le \beta \bigl( d(x,y) + \delta(x,y) \bigr) \quad \forall x,y \in X.
\end{equation}
Then, if
$$
\beta < \frac{2\sqrt{K}}{1+K},
$$
the mapping $T$ is an A.M.-G.M contraction  with coefficient
$$
\alpha = \beta \cdot \frac{1+K}{2\sqrt{K}}.
$$
Moreover, $\alpha \in [0,1)$, and for all $x,y \in X$,
$$
\frac{d(Tx,Ty) + \delta(Tx,Ty)}{2} \le \alpha \sqrt{d(x,y)\,\delta(x,y)}.
$$
\end{theorem}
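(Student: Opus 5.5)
The proof should come down to one scalar inequality: a reverse A.M.--G.M. bound for two numbers whose ratio is confined to $[1/K,K]$. With $t=\delta(x,y)/d(x,y)\in[1/K,K]$ we have
$$
\frac{d+\delta}{2\sqrt{d\delta}}=\frac{1+t}{2\sqrt t}=\frac12\Bigl(\sqrt t+\frac{1}{\sqrt t}\Bigr).
$$
The function $g(s)=\frac12(s+1/s)$ is decreasing on $(0,1]$ and increasing on $[1,\infty)$. Hence on $s=\sqrt t\in[K^{-1/2},K^{1/2}]$ its maximum is attained at the endpoints, where it equals $\frac12(\sqrt K+1/\sqrt K)=\frac{1+K}{2\sqrt K}$. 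This gives, for all $x,y$ with $d(x,y)>0$,
$$
d(x,y)+\delta(x,y)\le \frac{1+K}{\sqrt K}\,\sqrt{d(x,y)\,\delta(x,y)}.
$$
When $x=y$ both sides vanish, since $\delta$ is a semimetric. The inequality therefore holds for all $x,y\in X$.

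We then chain this bound with \eqref{ADC}. For any $x,y$,
$$
\frac{d(Tx,Ty)+\delta(Tx,Ty)}{2}\le \frac{\beta}{2}\bigl(d(x,y)+\delta(x,y)\bigr)\le \beta\,\frac{1+K}{2\sqrt K}\sqrt{d(x,y)\delta(x,y)}=\alpha\sqrt{d(x,y)\delta(x,y)}.
$$
This is exactly \eqref{am-gm-contr} with the stated $\alpha$. Since $\frac{1+K}{2\sqrt K}\ge 1$ by \eqref{am-gm}, we get $\alpha\ge\beta\ge0$. The hypothesis $\beta<\frac{2\sqrt K}{1+K}$ is equivalent to $\alpha<1$, so $\alpha\in[0,1)$ and $T$ is an A.M.--G.M. contraction.

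Two points need checking. The first is the monotonicity of $g$: $g'(s)=\frac12(1-s^{-2})$, so the endpoint maximum is justified. The second is the degenerate pairs $x=y$. In the main chain the only case to watch is $Tx=Ty$ with $x\neq y$, and there the left side is $0$, so the inequality holds trivially. Neither point is a real obstacle. The key step is recognizing the reverse A.M.--G.M. (Kantorovich-type) bound under the two-sided ratio condition \eqref{K-reg}.
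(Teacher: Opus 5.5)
Your proposal is correct and follows essentially the same route as the paper: bound $\frac{d(x,y)+\delta(x,y)}{2\sqrt{d(x,y)\,\delta(x,y)}}$ by $\frac{1+K}{2\sqrt{K}}$ using the two-sided ratio condition, then chain this with the additive-type contraction inequality to get $\alpha=\beta\frac{1+K}{2\sqrt{K}}<1$. The only difference is that you justify the endpoint maximum through the monotonicity of $\frac12(s+1/s)$, whereas the paper asserts that bound directly in this proof and does the analogous derivative computation only in the later theorems.
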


\begin{proof} The case $x=y$ is trivial. Let $x\neq y$.
Define the ratio
$$
R(x,y)=\frac{d(Tx,Ty) + \delta(Tx,Ty)}{2\sqrt{d(x,y)\,\delta(x,y)}}.
$$
From the additive-type contraction (\ref{ADC}),  we obtain
$$
R(x,y)=\frac{d(Tx,Ty) + \delta(Tx,Ty)}{2\sqrt{d(x,y)\,\delta(x,y)}}\leq \frac{\beta(d(x,y) + \delta(x,y))}{2\sqrt{d(x,y)\,\delta(x,y)}}.
$$
From (\ref{K-reg}) it follows that
$$\frac{d(x,y) + \delta(x,y)}{2\sqrt{d(x,y)\,\delta(x,y)}}\leq \frac{1+K}{2\sqrt{K}}\,.$$
Thus
$$
R(x,y)=\frac{d(Tx,Ty) + \delta(Tx,Ty)}{2\sqrt{d(x,y)\,\delta(x,y)}}\leq \frac{\beta(1+K)}{2\sqrt{K}} .
$$
Setting $ \alpha = \beta \cdot \frac{1+K}{2\sqrt{K}}$, we obtain
$$
\frac{d(Tx, Ty) + \delta(Tx, Ty)}{2} \leq \alpha \sqrt{d(x, y) \delta(x, y)}.
$$
Since $ \beta < \frac{2\sqrt{K}}{1+K}$, we have $ \alpha < 1 $. This completes the proof.
\end{proof}

\medskip

In the following statement, it is shown that, under the condition of comparability of the metric and the semimetric on the images of the mapping and a sufficiently small coefficient of multiplicative-type contraction, this mapping automatically becomes an A.M.-G.M. contraction.

\begin{theorem}
Let $(X,d)$ be a metric space, $\delta$ a semimetric on $X$, and $T: X \to X$ a multiplicative-type contraction:
\begin{equation}\label{MC}
d(Tx,Ty) \cdot \delta(Tx,Ty) \le \beta \; d(x,y) \cdot \delta(x,y) \qquad \forall x,y \in X,
\end{equation}
with $\beta \in [0,1)$. Suppose there exists a constant $K_* \ge 1$ such that for all $x,y \in X$ the following comparison condition on the images holds:
\begin{equation}\label{C*}
\frac{1}{K_*}\, \delta(Tx,Ty) \le d(Tx,Ty)\le K_*\delta(Tx,Ty)\, .
\end{equation}
If $\beta < \frac{4K_*}{(K_* + 1)^2}$,  then $T$ is an A.M.-G.M. contraction:
\begin{equation}
\frac{d(Tx,Ty) + \delta(Tx,Ty)}{2} \le \alpha \; \sqrt{d(x,y)\,\delta(x,y)} \qquad \forall x,y \in X,
\end{equation}
with
$$
\alpha = \frac{K_* + 1}{2\sqrt{K_*}} \cdot \sqrt{\beta}\in [0,1).
$$
\end{theorem}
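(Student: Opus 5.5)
Fix $x,y\in X$ and write $a=d(Tx,Ty)$, $b=\delta(Tx,Ty)$. The idea is to bound the arithmetic mean of $a,b$ by a constant multiple of their geometric mean, using the comparison condition (\ref{C*}). Then (\ref{MC}) turns the square of that geometric mean into $\beta\, d(x,y)\delta(x,y)$.

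First dispose of the degenerate case. If $Tx=Ty$, then $a=b=0$ and the claimed inequality is trivial, since its right-hand side is nonnegative. So assume $Tx\neq Ty$. By (\ref{C*}) both $a,b>0$, and the ratio $t=a/b$ lies in $[1/K_*,K_*]$.

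The key step is the bound
$$
\frac{a+b}{2\sqrt{ab}}=\frac{1}{2}\Bigl(\sqrt{t}+\frac{1}{\sqrt{t}}\Bigr)\le \frac{1}{2}\Bigl(\sqrt{K_*}+\frac{1}{\sqrt{K_*}}\Bigr)=\frac{K_*+1}{2\sqrt{K_*}}.
$$
It holds because $s\mapsto s+1/s$ is decreasing on $(0,1]$ and increasing on $[1,\infty)$. On the interval $[1/\sqrt{K_*},\sqrt{K_*}]$ its maximum is therefore attained at the endpoints, where the two values coincide. This is the same computation as in the preceding theorem, now applied to the images rather than to the pair $x,y$.

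Combining this with (\ref{MC}) gives
$$
\frac{a+b}{2}\le \frac{K_*+1}{2\sqrt{K_*}}\sqrt{ab}\le \frac{K_*+1}{2\sqrt{K_*}}\sqrt{\beta}\,\sqrt{d(x,y)\delta(x,y)}=\alpha\sqrt{d(x,y)\delta(x,y)}.
$$
It remains to check $\alpha\in[0,1)$. Clearly $\alpha\ge 0$. Moreover,
$$
\alpha<1 \iff \sqrt{\beta}<\frac{2\sqrt{K_*}}{K_*+1} \iff \beta<\frac{4K_*}{(K_*+1)^2},
$$
which is exactly the hypothesis.

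No step is a serious obstacle. The only points needing care are the elementary extremal bound for $\tfrac12(\sqrt t+1/\sqrt t)$ on $[1/K_*,K_*]$, and the observation that (\ref{C*}) prevents exactly one of $a,b$ from vanishing, so the ratio $t$ is well defined whenever $Tx\neq Ty$.
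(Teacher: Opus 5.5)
Your proof is correct and follows the paper's argument essentially step for step. The paper likewise sets $t=d(Tx,Ty)/\delta(Tx,Ty)\in[1/K_*,K_*]$, bounds $\varphi(t)=\frac{t+1}{2\sqrt{t}}$ by its common endpoint value $\frac{K_*+1}{2\sqrt{K_*}}$ (using $\varphi'$ where you substitute $s=\sqrt{t}$), and then applies the multiplicative condition to the geometric mean of the images. One small correction: when $Tx\neq Ty$, positivity of $a$ and $b$ already follows from the metric and semimetric axioms, so the comparison condition is not needed for that step.
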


\begin{proof} The case $Tx = Ty$ is trivial.  Assume that  $Tx \neq Ty$.  Define the ratio  $t = \frac{d(Tx,Ty)}{\delta(Tx,Ty)}.$
By the comparison condition on the images (\ref{C*}), we have  $t \in [1/K_*, K_*]$.

Now express the arithmetic mean and the geometric mean in terms of $\delta(Tx,Ty)$ and $t$:
$$
\frac{d(Tx,Ty) + \delta(Tx,Ty)}{2} = \delta(Tx,Ty)\frac{t+1}{2},
$$
$$
\sqrt{d(Tx,Ty)\,\delta(Tx,Ty)} = \delta(Tx,Ty)\sqrt{t}.
$$
Hence,
\begin{equation}\label{eq*1}
\frac{d(Tx,Ty) + \delta(Tx,Ty)}{2\sqrt{d(Tx,Ty)\,\delta(Tx,Ty)}} = \frac{t+1}{2\sqrt{t}}.
\end{equation}

Consider the auxiliary function $\varphi(t) = \dfrac{t+1}{2\sqrt{t}}$ for $t > 0$. Its derivative is $\varphi'(t) = \dfrac{t-1}{4t^{3/2}}$.
Thus $\phi$ is strictly decreasing on $(0,1]$ and strictly increasing on $[1,\infty)$. Since $K_{*} \geq 1$, the interval $[1/K_{*}, K_{*}]$ contains $t = 1$. Consequently, the maximum of $\phi$ on this interval is attained at one of the endpoints. Evaluating:
$$
\varphi(K_*) = \frac{K_*+1}{2\sqrt{K_*}}, \qquad
\varphi(1/K_*) = \frac{1/K_*+1}{2\sqrt{1/K_*}} = \frac{K_*+1}{2\sqrt{K_*}}.
$$
Therefore, for all $t \in [1/K_*, K_*]$,
\begin{equation}\label{eq*2}
\frac{t+1}{2\sqrt{t}} \le \frac{K_*+1}{2\sqrt{K_*}}.
\end{equation}
Combining (\ref{eq*1}) and (\ref{eq*2}) yields
$$
\frac{d(Tx,Ty) + \delta(Tx,Ty)}{2\sqrt{d(Tx,Ty)\,\delta(Tx,Ty)}} \le \frac{K_*+1}{2\sqrt{K_*}}.
$$
Multiplying both sides by  $2\sqrt{d(Tx,Ty)\,\delta(Tx,Ty)}$ gives
\begin{equation}\label{eq*3}
d(Tx,Ty) + \delta(Tx,Ty) \le \frac{K_*+1}{\sqrt{K_*}} \; \sqrt{d(Tx,Ty)\,\delta(Tx,Ty)}.
\end{equation}
Now, from the multiplicative-type contraction assumption (\ref{MC}),
$$
d(Tx,Ty)\,\delta(Tx,Ty) \le \beta \; d(x,y)\,\delta(x,y).
$$
Hence,
\begin{equation}\label{eq*84}
\sqrt{d(Tx,Ty)\,\delta(Tx,Ty)} \le \sqrt{\beta} \; \sqrt{d(x,y)\,\delta(x,y)}.
\end{equation}
Combining (\ref{eq*3}) and (\ref{eq*84}), we obtain:
$$
d(Tx,Ty) + \delta(Tx,Ty) \le \frac{K_*+1}{\sqrt{K_*}} \sqrt{\beta} \; \sqrt{d(x,y)\,\delta(x,y)}.
$$
Finally, divide by 2 to obtain the A.M.-G.M. contraction:
$$
\frac{d(Tx,Ty) + \delta(Tx,Ty)}{2} \le \frac{K_*+1}{2\sqrt{K_*}} \sqrt{\beta} \; \sqrt{d(x,y)\,\delta(x,y)}.
$$
Denote
$$
\alpha = \frac{K_*+1}{2\sqrt{K_*}} \cdot \sqrt{\beta}.
$$
The condition $\alpha < 1$ is equivalent to
$\beta < \frac{4K_*}{(K_*+1)^2}.$
This completes the proof.
\end{proof}

The following theorem shows that, under comparability conditions, an additive contraction implies a multiplicative one provided the coefficient
$\beta$  is sufficiently small.

\begin{theorem}[Additive $\rightarrow$ Multiplicative]\label{t0.3}
Let $(X,d)$ be a metric space and $\delta$ a semimetric on $X$.
Suppose there exists $K \geq 1$ such that for all $x \neq y$,
\begin{equation}\label{AMC1}
\frac{1}{K} \leq \frac{\delta(x,y)}{d(x,y)} \leq K.
\end{equation}

Let $T: X \to X$ satisfy the additive contraction
\begin{equation}\label{ADCth3}
d(Tx,Ty) + \delta(Tx,Ty) \leq \beta \big( d(x,y) + \delta(x,y) \big) \quad \forall x,y \in X,
\end{equation}
with $\beta \in [0,1)$.
Then, if
$$
\beta < \frac{2\sqrt{K}}{1+K},
$$
the mapping $T$ is a multiplicative contraction:
$$
d(Tx,Ty) \cdot \delta(Tx,Ty) \leq \beta' \cdot d(x,y) \cdot \delta(x,y) \quad \forall x,y \in X,
$$
with $\displaystyle \beta' = \frac{\beta^2 (K+1)^2}{4K} < 1$.
\end{theorem}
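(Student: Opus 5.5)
The natural route is to go through the first theorem of this section (additive $\Rightarrow$ A.M.-G.M.) and then the remark in the introduction that every A.M.-G.M. contraction with coefficient $\alpha$ is a multiplicative-type contraction with coefficient $\alpha^2$. The hypotheses of Theorem~\ref{t0.3} are exactly those of that first theorem: the two-sided comparability (\ref{AMC1}), the additive contraction (\ref{ADCth3}) with coefficient $\beta$, and $\beta<\frac{2\sqrt K}{1+K}$. So I would invoke it directly to obtain, for all $x,y\in X$,
$$
\frac{d(Tx,Ty)+\delta(Tx,Ty)}{2}\le \alpha\sqrt{d(x,y)\,\delta(x,y)},\qquad \alpha=\beta\,\frac{1+K}{2\sqrt K}<1 .
$$

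Next I would apply the A.M.-G.M. inequality (\ref{am-gm}) to the left-hand side. This bounds $\sqrt{d(Tx,Ty)\,\delta(Tx,Ty)}$ from above by the same quantity $\alpha\sqrt{d(x,y)\,\delta(x,y)}$. Squaring both sides, which is legitimate since everything is nonnegative, gives
$$
d(Tx,Ty)\,\delta(Tx,Ty)\le \alpha^2\, d(x,y)\,\delta(x,y).
$$
Here $\alpha^2=\frac{\beta^2(K+1)^2}{4K}=\beta'$.

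Finally, $\beta'<1$ is equivalent to $\alpha<1$, which in turn is equivalent to the assumed bound $\beta<\frac{2\sqrt K}{1+K}$. The pairs with $x=y$ are trivial, with both sides zero. For pairs with $x\neq y$ but $Tx=Ty$ the left side vanishes, so the inequality holds there too.

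I expect no real obstacle. The only point needing care is that the earlier theorem requires (\ref{K-reg}) only for $d(x,y)>0$, which is exactly (\ref{AMC1}). A direct alternative avoiding the earlier theorem would be to combine $\sqrt{ab}\le\frac{a+b}{2}$ on the image side with $d+\delta\le\frac{1+K}{\sqrt K}\sqrt{d\delta}$ on the domain side, the latter following from (\ref{AMC1}) via the function $\varphi(t)=\frac{t+1}{2\sqrt t}$ as in the preceding proof. This gives the same constant.
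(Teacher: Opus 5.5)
Your proof is correct and essentially the same as the paper's. The paper inlines exactly your two steps: it bounds $\frac{d+\delta}{2\sqrt{d\delta}}\le\frac{K+1}{2\sqrt K}$ via $\phi(t)=\frac{t+1}{2\sqrt t}$ and combines this with the additive contraction, which is the content of the first theorem of the section. It then applies the A.M.-G.M. inequality to $d(Tx,Ty),\delta(Tx,Ty)$ and squares, so your citation of the earlier theorem plus the introductory remark (and your ``direct alternative'') reproduces it step for step.
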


\begin{proof}
Assume that all conditions of the theorem are satisfied. For arbitrary $x,y\in X$, consider the case $x\neq y$; if $x=y$, then both sides of the multiplicative inequality are zero and the inequality holds trivially. From the comparison condition (\ref{AMC1}), for any $x\neq y$ we have $\delta(x,y)/d(x,y)\in[1/K, K]$.

Consider the function $\phi(t)=\frac{t+1}{2\sqrt{t}}$ for $t>0$. Its derivative is $\phi'(t)=\frac{t-1}{4t^{3/2}}$, so $\phi$ is strictly decreasing on $(0,1]$ and strictly increasing on $[1,\infty)$. Since $K\ge 1$, the interval $[1/K, K]$ contains $t=1$. Therefore, the maximum of $\phi$ on this interval is attained at one of the endpoints. Direct computation gives
$$
\phi(K)=\frac{K+1}{2\sqrt{K}},\qquad \phi(1/K)=\frac{1/K+1}{2\sqrt{1/K}}=\frac{K+1}{2\sqrt{K}}.
$$
Hence, for all $t\in[1/K, K]$ we have $\phi(t)\le \frac{K+1}{2\sqrt{K}}$. Setting $t=\delta(x,y)/d(x,y)$, we obtain
\begin{equation}\label{prth3eq1}
\frac{d(x,y)+\delta(x,y)}{2\sqrt{d(x,y)\delta(x,y)}} \le \frac{K+1}{2\sqrt{K}}.
\end{equation}
Now apply the additive contraction condition (\ref{ADCth3}). For any $x,y\in X$,
$$
\frac{d(Tx,Ty)+\delta(Tx,Ty)}{2\sqrt{d(x,y)\delta(x,y)}}
\le \frac{\beta\bigl(d(x,y)+\delta(x,y)\bigr)}{2\sqrt{d(x,y)\delta(x,y)}}.
$$
Using inequality (\ref{prth3eq1}), we obtain
\begin{equation}\label{prth3eq2}
\frac{d(Tx,Ty)+\delta(Tx,Ty)}{2\sqrt{d(x,y)\delta(x,y)}}
\le \beta\cdot\frac{K+1}{2\sqrt{K}}.
\end{equation}
Next, apply the classical inequality between the arithmetic and geometric means to the numbers $d(Tx,Ty)$ and $\delta(Tx,Ty)$:
\begin{equation}\label{prth3eq3}
\sqrt{d(Tx,Ty)\,\delta(Tx,Ty)} \le \frac{d(Tx,Ty)+\delta(Tx,Ty)}{2}.
\end{equation}
Combining (\ref{prth3eq2}) and (\ref{prth3eq3}), we get
$$
\sqrt{d(Tx,Ty)\,\delta(Tx,Ty)}
\le \beta\cdot\frac{K+1}{2\sqrt{K}}\cdot\sqrt{d(x,y)\delta(x,y)}.
$$
Squaring both sides (all quantities are nonnegative) yields
$$
d(Tx,Ty)\,\delta(Tx,Ty)
\le \beta^2\cdot\frac{(K+1)^2}{4K}\cdot d(x,y)\,\delta(x,y).
$$
Denote $\beta' = \frac{\beta^2(K+1)^2}{4K}$. Then the last inequality takes the form
$$
d(Tx,Ty)\,\delta(Tx,Ty) \le \beta'\,d(x,y)\,\delta(x,y) \qquad \forall x,y\in X.
$$
It remains to verify $\beta' < 1$. From $\beta < \frac{2\sqrt{K}}{1+K}$ we get $\beta^2 < \frac{4K}{(1+K)^2}$, hence $\frac{\beta^2(1+K)^2}{4K} < 1$, i.e. $\beta' < 1$. This completes the proof.
\end{proof}

\medskip

The following theorem shows that, under comparability conditions on the images, a multiplicative contraction implies an additive one provided the coefficient
$\beta$ is sufficiently small.

\begin{theorem}[Multiplicative $\rightarrow$ Additive]\label{t0.4}
Let $(X,d)$ be a metric space and $\delta$ a semimetric on $X$.
Suppose there exists $K_{*} \geq 1$ such that for all $x \neq y$,

\begin{equation}\label{eqth4C*}
\frac{1}{K_{*}} \leq \frac{d(Tx,Ty)}{\delta(Tx,Ty)} \leq K_{*}.
\end{equation}
Let $T: X \to X$ satisfy the multiplicative contraction
\begin{equation}\label{Mth4*}
d(Tx,Ty) \cdot \delta(Tx,Ty) \leq \beta \; d(x,y) \cdot \delta(x,y) \quad \forall x,y \in X,
\end{equation}
with $\beta \in [0,1)$.
Then, if
$$
\beta < \frac{4K_{*}}{(K_{*}+1)^2},
$$
the mapping $T$ is an additive contraction:
$$
d(Tx,Ty) + \delta(Tx,Ty) \leq \beta' \big( d(x,y) + \delta(x,y) \big) \quad \forall x,y \in X,
$$
with $\displaystyle \beta' = \frac{K_{*} + 1}{2\sqrt{K_{*}}} \cdot \sqrt{\beta} < 1$.
\end{theorem}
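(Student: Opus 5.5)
The plan is to pass through the A.M.-G.M. contraction. Theorem~\ref{t0.4} should follow directly from the preceding theorem (multiplicative-type $\Rightarrow$ A.M.-G.M.) together with one more application of inequality~(\ref{am-gm}), this time on the right-hand side. This is the same mechanism as in Definition~\ref{d16} and in the Remark that A.M.-G.M. contractions are additive-type contractions.

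First I dispose of the trivial case. If $Tx=Ty$, the left-hand side $d(Tx,Ty)+\delta(Tx,Ty)$ is $0$ and the claimed inequality holds. This also covers $x=y$. So assume $Tx\neq Ty$, hence $x\neq y$. Then the ratio in~(\ref{eqth4C*}) is well defined and lies in $[1/K_*,K_*]$. This is exactly the comparison condition~(\ref{C*}) on the images used in the preceding theorem. With~(\ref{Mth4*}) and $\beta<\frac{4K_*}{(K_*+1)^2}$, all hypotheses of that theorem hold. It yields
$$
\frac{d(Tx,Ty)+\delta(Tx,Ty)}{2}\le \alpha\sqrt{d(x,y)\,\delta(x,y)},\qquad \alpha=\frac{K_*+1}{2\sqrt{K_*}}\sqrt{\beta}\in[0,1).
$$
If one prefers not to invoke that theorem as a black box, I would repeat its argument. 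Write $t=d(Tx,Ty)/\delta(Tx,Ty)$ and maximize $\varphi(t)=\frac{t+1}{2\sqrt t}$ on $[1/K_*,K_*]$. This gives $d(Tx,Ty)+\delta(Tx,Ty)\le \frac{K_*+1}{\sqrt{K_*}}\sqrt{d(Tx,Ty)\delta(Tx,Ty)}$. Then take square roots in~(\ref{Mth4*}).

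Next I apply~(\ref{am-gm}) with $a=d(x,y)$ and $b=\delta(x,y)$, namely $\sqrt{d(x,y)\delta(x,y)}\le \frac{d(x,y)+\delta(x,y)}{2}$. Substituting this into the display and multiplying by $2$ gives
$$
d(Tx,Ty)+\delta(Tx,Ty)\le \alpha\bigl(d(x,y)+\delta(x,y)\bigr).
$$
This is the additive contraction with $\beta'=\alpha=\frac{K_*+1}{2\sqrt{K_*}}\sqrt\beta$. Finally, $\beta'<1$ is equivalent to $\beta<\frac{4K_*}{(K_*+1)^2}$ after squaring.

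There is no real obstacle here. The only point needing care is bookkeeping: hypothesis~(\ref{eqth4C*}) is quantified over $x\neq y$ but only makes sense when $Tx\neq Ty$, so the case $Tx=Ty$ must be treated separately, as above. One should also note that the first step bounds the mean at the image points by the geometric mean at $(x,y)$, and that this geometric mean is then bounded by the arithmetic mean at $(x,y)$ with no comparability assumption needed on $(x,y)$ itself.
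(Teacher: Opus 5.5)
Your proof is correct and follows essentially the same route as the paper. The paper also sets aside the case $Tx=Ty$, bounds $\varphi(t)=\frac{t+1}{2\sqrt{t}}$ on $[1/K_*,K_*]$ at the image pair, takes square roots in the multiplicative condition, and finishes with the A.M.-G.M. inequality applied to $d(x,y)$ and $\delta(x,y)$. The only difference is presentational: the paper repeats the steps of the preceding multiplicative $\Rightarrow$ A.M.-G.M. theorem inline rather than citing it, which is exactly what your fallback paragraph does.
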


\begin{proof}
Assume that all conditions of the theorem are satisfied. For arbitrary   $x,y\in X$, consider the case $Tx\neq Ty$; if  $Tx=Ty$, then the left-hand side of the additive inequality vanishes and the inequality holds trivially. From the comparison condition on the images (\ref{eqth4C*}), for any $x,y$ with $Tx\neq Ty$ we have $d(Tx,Ty)/\delta(Tx,Ty)\in[1/K_*, K_*]$.

Consider the function $\phi(t)=\frac{t+1}{2\sqrt{t}}$ for $t>0$. Hence, for all $t\in[1/K_*, K_*]$ we have $\phi(t)\le \frac{K_*+1}{2\sqrt{K_*}}$. Setting $t=d(Tx,Ty)/\delta(Tx,Ty)$, we obtain

$$
\frac{d(Tx,Ty)+\delta(Tx,Ty)}{2\sqrt{d(Tx,Ty)\delta(Tx,Ty)}} \le \frac{K_*+1}{2\sqrt{K_*}}.
$$
Hence,
\begin{equation}\label{eq1profMth4*}
d(Tx,Ty)+\delta(Tx,Ty) \le \frac{K_*+1}{\sqrt{K_*}}\sqrt{d(Tx,Ty)\delta(Tx,Ty)}.
\end{equation}
Next, using the multiplicative contraction condition (\ref{Mth4*}), we have
\begin{equation}\label{eq2profMth4*}
\sqrt{d(Tx,Ty)\delta(Tx,Ty)} \le \sqrt{\beta}\,\sqrt{d(x,y)\delta(x,y)}.
\end{equation}
Combining (\ref{eq1profMth4*}) and (\ref{eq2profMth4*}), we obtain:
$$
d(Tx,Ty)+\delta(Tx,Ty) \le \frac{K_*+1}{\sqrt{K_*}}\sqrt{\beta}\,\sqrt{d(x,y)\delta(x,y)}. 
$$
Now applying the classical inequality between the arithmetic and geometric means to the numbers $d(x,y)$ and $\delta(x,y)$,  we obtain
$$
d(Tx,Ty)+\delta(Tx,Ty) \le  \frac{K_*+1}{2\sqrt{K_*}}\sqrt{\beta}\,\bigl(d(x,y)+\delta(x,y)\bigr).
$$
Denote $\beta' = \frac{K_*+1}{2\sqrt{K_*}}\sqrt{\beta}$. Then the last inequality takes the form
$$
d(Tx,Ty)+\delta(Tx,Ty) \le \beta'\bigl(d(x,y)+\delta(x,y)\bigr) \qquad \forall x,y\in X.
$$
It remains to verify $\beta' < 1$. From $\beta < \frac{4K_*}{(K_*+1)^2}$ we get $\sqrt{\beta} < \frac{2\sqrt{K_*}}{K_*+1}$, hence $\frac{K_*+1}{2\sqrt{K_*}}\sqrt{\beta} < 1$, i.e. $\beta' < 1$.
This completes the proof.
\end{proof}

\medskip

The following theorem establishes the equivalence of additive and multiplicative contractions for a sufficiently small coefficient $\beta$.

\begin{theorem}[Criterion of equivalence of additive and multiplicative contractions]
\label{thm:0.6}
Let $(X,d)$ be a metric space and $\delta$ a semimetric on $X$.
Suppose there exist constants $K, K_* \ge 1$ such that for all $x,y \in X$ with $x \ne y$
$$
\frac{1}{K} \le \frac{\delta(x,y)}{d(x,y)} \le K,
$$
and for all $x,y \in X$ with $Tx \ne Ty$
$$
\frac{1}{K_*} \le \frac{d(Tx,Ty)}{\delta(Tx,Ty)} \le K_*.
$$
Define
$$
\beta_0 = \min\left\{\frac{2\sqrt{K}}{1+K},\ \frac{4K_*}{(K_*+1)^2}\right\}.
$$
Then for any $\beta < \beta_0$ the following statements are equivalent:
\begin{enumerate}
\item  $T$ is an additive-type contraction:
$$
d(Tx,Ty) + \delta(Tx,Ty) \le \beta\bigl(d(x,y) + \delta(x,y)\bigr) \quad \forall x,y \in X;
$$
\item  $T$ is a multiplicative-type contraction:
$$
d(Tx,Ty) \cdot \delta(Tx,Ty) \le \beta \; d(x,y) \cdot \delta(x,y) \quad \forall x,y \in X.$$
\end{enumerate}

Moreover, if (1) holds, then \(T\) is a multiplicative contraction with coefficient \(\frac{\beta^2(K+1)^2}{4K} < 1\);
if (2) holds, then \(T\) is an additive contraction with coefficient \(\frac{K_*+1}{2\sqrt{K_*}}\sqrt{\beta} < 1\).
\end{theorem}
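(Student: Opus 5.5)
The plan is to derive the criterion directly from Theorems~\ref{t0.3} and~\ref{t0.4}, since it is essentially their conjunction. I read ``equivalent'' the way the final sentence of the statement makes precise: (1) implies that $T$ is a multiplicative-type contraction with some coefficient in $[0,1)$, and (2) implies that $T$ is an additive-type contraction with some coefficient in $[0,1)$. I will not try to prove that the \emph{same} $\beta$ works in both directions. That stronger reading is not what the two preceding theorems give, and I would not expect it to hold in general.

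First I check that $\beta<\beta_0$ puts us inside the hypotheses of both earlier theorems at once. By the definition of the minimum, $\beta<\beta_0$ gives both $\beta<\frac{2\sqrt K}{1+K}$ and $\beta<\frac{4K_*}{(K_*+1)^2}$. Both thresholds are at most $1$, because $2\sqrt K\le 1+K$ and $4K_*\le (K_*+1)^2$ by (\ref{am-gm}). Hence $\beta\in[0,1)$ automatically, as both theorems require.

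For (1)$\Rightarrow$(2), the comparability condition $\frac1K\le \delta(x,y)/d(x,y)\le K$ for $x\ne y$ is exactly (\ref{AMC1}), and (1) is (\ref{ADCth3}). Theorem~\ref{t0.3} then yields the multiplicative inequality with $\beta'=\frac{\beta^2(K+1)^2}{4K}<1$. For (2)$\Rightarrow$(1), the image condition for $Tx\ne Ty$ is (\ref{eqth4C*}), and (2) is (\ref{Mth4*}). Theorem~\ref{t0.4} gives the additive inequality with $\beta'=\frac{K_*+1}{2\sqrt{K_*}}\sqrt\beta<1$.

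I also need to cover the degenerate pairs excluded from the ratio hypotheses. If $x=y$, both sides of both inequalities are zero. If $x\ne y$ but $Tx=Ty$, the left-hand sides vanish. The earlier proofs already treat these cases, so nothing new is needed.

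The main obstacle is conceptual rather than computational: stating precisely in what sense the two conditions are ``equivalent'', given that the coefficients change. I will handle it by stating explicitly that the equivalence concerns membership in the two classes of contractions, with the explicit coefficients listed in the ``Moreover'' part. The rest is bookkeeping of the hypotheses of Theorems~\ref{t0.3} and~\ref{t0.4}.
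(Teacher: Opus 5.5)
Your proposal is correct and follows the paper's own proof: the paper also reduces the statement to Theorems~\ref{t0.3} and~\ref{t0.4}, using only the observation that $\beta<\beta_0$ gives both thresholds. Your reading of ``equivalent'' as membership in the two classes, with the coefficients changing as in the ``Moreover'' part, is warranted and more precise than the paper. With the same $\beta$ in both directions the statement fails: take $X=\mathbb{R}$, $d(x,y)=\delta(x,y)=|x-y|$, $K=K_*=1$ (so $\beta_0=1$), $Tx=x/2$ and $\beta=1/4$; then (2) holds with equality, while (1) would require $|x-y|\le\frac12|x-y|$.
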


\medskip

\begin{proof}
  The equivalence follows directly from Theorems~\ref{t0.3} and~\ref{t0.4}, since \(\beta < \beta_0\) implies both \(\beta < \frac{2\sqrt{K}}{1+K}\) and \(\beta < \frac{4K_*}{(K_*+1)^2}\).
\end{proof}

\section*{Declarations: }
	\subsection*{Ethics approval and consent to participate}
	Not Applicable.
	\subsection*{Consent for publication}
	Not Applicable.
	\subsection*{Availability of data and materials}
	Not Applicable.
	\subsection*{Competing interests}
	The authors declare that they have no competing interests.
	\subsection*{Funding} E. Petrov was supported by the National Research Foundation of Ukraine, project No.~2025.07/0369 ``Qualitative methods of nonlinear analysis of heterogeneous structures''.

\end{document}